\newtheorem{theorem}{Theorem}[section]
\newtheorem{lemma}[theorem]{Lemma}
\newtheorem{proposition}[theorem]{Proposition}
\newtheorem{corollary}[theorem]{Corollary}
\newtheorem{conjecture}[theorem]{Conjecture}
\theoremstyle{definition}
\newtheorem{remark}[theorem]{Remark}
\numberwithin{equation}{section}
\newcommand{\CC}{\mathbb C}
\newcommand{\HH}{\mathbb H}
\newcommand{\QQ}{\mathbb Q}
\newcommand{\ZZ}{\mathbb Z}
\newcommand{\Mp}{\mathop{\mathrm {Mp}}\nolimits}
\newcommand{\SL}{\mathop{\mathrm {SL}}\nolimits}
\newcommand{\Sp}{\mathop{\mathrm {Sp}}\nolimits}
\newcommand{\Orth}{\mathop{\null\mathrm {O}}\nolimits}
\newcommand{\rank}{\mathop{\mathrm {rk}}\nolimits}
\def\Grit{\mathbf{G}}
\def\Borch{\mathbf{B}}
\def\m{\operatorname{mod}}
\def\det{\operatorname{det}}
\def\w{\operatorname{w}}
\newenvironment{psmallmatrix}
  {\left(\begin{smallmatrix}}
{\end{smallmatrix}\right)}
\begin{document}

\title[Mathieu moonshine and Borcherds products]{Mathieu moonshine and Borcherds products}

\author{Haowu Wang}

\address{Center for Geometry and Physics, Institute for Basic Science (IBS), Pohang 37673, Korea}

\email{haowu.wangmath@gmail.com}

\author{Brandon Williams}

\address{Lehrstuhl A für Mathematik, RWTH Aachen, 52056 Aachen, Germany}

\email{brandon.williams@matha.rwth-aachen.de}

\subjclass[2020]{11F50, 11F46, 81T30}

\date{\today}

\keywords{Borcherds products, Jacobi forms, Mathieu group $M_{24}$,Twisted genera of $K3$ surfaces}

\begin{abstract} 
The twisted elliptic genera of a $K3$ surface associated with the conjugacy classes of the Mathieu group $M_{24}$ are known to be weak Jacobi forms of weight $0$. In 2010, Cheng constructed formal infinite products from the twisted elliptic genera and conjectured that they define Siegel modular forms of degree two. In this paper we prove that for each conjugacy class of level $N_g$ the associated product is a meromorphic Borcherds product on the lattice $U(N_g)\oplus U \oplus A_1$ in a strict sense. We also compute the divisors of these products and determine for which conjugacy classes the product can be realized as an additive (generalized Saito--Kurokawa) lift. 
\end{abstract}

\maketitle

\section{introduction}

Monstrous moonshine, proposed by Conway--Norton \cite{CN79} and proved by Borcherds \cite{Bor92}, is the phenomenon that the Fourier coefficients of the modular $J$-function and the Hauptmoduls of other genus zero subgroups of $\mathrm{SL}_2(\mathbb{R})$ have simple expressions involving the dimensions of representations of the monster group. 

Eguchi, Ooguri and Tachikawa observed \cite{EOT11} that the Fourier coefficients of the elliptic genus of a $K3$ surface have similar expressions, now involving the dimensions of representations of the largest Mathieu group $M_{24}$. This phenomenon has been generalized to twists by all elements of $M_{24}$ and is called \emph{Mathieu moonshine}. Cheng, Duncan and Harvey \cite{CDH14a,CDH14b} further generalized this phenomenon to the umbral moonshine, which relates Niemeier lattices and Ramanujan's mock theta functions.

The elliptic genus of a $K3$ surface is a weak Jacobi form of weight $0$ and index $1$ on $\SL_2(\ZZ)$. By the work of Cheng \cite{Che10}, Gaberdiel--Hohenegger--Volpato \cite{GHV10a, GHV10b} and Eguchi--Hikami \cite{EH11}, one can associate to each conjugacy class of $M_{24}$ a twist of the elliptic genus which plays the role of the McKay--Thompson series in Monstrous moonshine. If $[g]$ is a conjugacy class of $M_{24}$ of order $n_g$ and level $N_g$, then the twisted elliptic genus corresponding to $[g]$ has the expression
\begin{equation}
\begin{split}
\phi_g(\tau,z)&= \frac{\chi(g)}{12}\phi_{0,1}(\tau, z) + \tilde{T}_g(\tau)\phi_{-2,1}(\tau,z)\\    
&=\sum_{n\geq 0}\sum_{r\in \ZZ} c_g(4n-r^2)q^n\zeta^r,
\end{split}
\end{equation}
where $\chi(g):=\phi_g(\tau,0)$ is the Witten index, the character of a certain permutation representation of $M_{24}$ on $24$ points; $\tilde{T}_g$ is a certain holomorphic modular form of weight $2$ and trivial character on $\Gamma_0(N_g)$; $\phi_{0,1}$ and $\phi_{-2,1}$ are the basic weak Jacobi forms introduced by Eichler--Zagier \cite{EZ85}; and $q=e^{2\pi i\tau}$ and $\zeta=e^{2\pi iz}$ for $(\tau, z) \in \HH\times\CC$. Altogether $\phi_g$ is a weak Jacobi form of weight $0$ and index $1$ on $\Gamma_0(N_g)$ with trivial character. The forms $\tilde{T}_g$ involve mock modular forms of weight $\frac{1}{2}$ appearing in Mathieu moonshine \cite{CD12b, CD12}.  Gannon \cite{Gan16} has proved that the Fourier coefficients of these mock modular forms are true characters of $M_{24}$. 

The Igusa cusp form $\Phi_{10}$ \cite{Igu62} is related to the $K3$ elliptic genus $2\phi_{0,1}$ through the Borcherds multiplicative lift \cite{Bor95, GN97, Bor98}. The inverse of $\Phi_{10}$ gives the second-quantized elliptic genus of a $K3$ surface \cite{DMVV97} and the partition function for $\frac{1}{4}$-BPS states of type II superstring theory compactified on $K3\times T^2$ \cite{DVV97}, and the square root of $\Phi_{10}$ is the denominator of a generalized Kac--Moody algebra \cite{GN97}.  Cheng \cite{Che10} observed that an action of $M_{24}$ on a certain algebraic object related to this algebra would lead to expressions for the twists by $g\in M_{24}$ as infinite products involving the $K3$ elliptic genera twisted by $g^k$ for $k|n_g$. Based on this observation, Cheng and Duncan \cite[\S 4]{CD12} associated to each conjugacy class $g$ of $M_{24}$ a formal infinite product 
\begin{equation}\label{eq:infprod}
 \Phi_g(\tau,z,\omega) = q\zeta s  \prod_{(n,r,m)>0}\exp\left( -\sum_{a=1}^\infty \frac{c_{g^a}(4nm-r^2)}{a} \Big(q^n\zeta^r s^m\Big)^a \right),
\end{equation}
where $s=e^{2\pi i\omega}$ and $(n,r,m)>0$ means that either $m>0$, or $m=0$ and $n>0$, or $m=n=0$ and $r<0$. They further made the following precise conjecture.

\begin{conjecture}[Conjecture 4.1 of \cite{CD12}]\label{conj}
For each conjugacy class $g$ of $M_{24}$, the product $\Phi_g$ defines a Siegel modular form with some multiplier on the congruence subgroup 
$$
\Gamma_0^{(2)}(N_g)=\left\{ \begin{psmallmatrix}
A & B \\ C & D
\end{psmallmatrix} \in  \Sp_4(\ZZ) : C=0 \m N_g \right\}.
$$
\end{conjecture}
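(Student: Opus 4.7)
The plan is to realize the formal product $\Phi_g$ as a Borcherds product on the even lattice $L = U(N_g) \oplus U \oplus A_1$ of signature $(2,3)$, and then invoke Borcherds' multiplicative theorem. The Type IV domain attached to $L$ is isomorphic to the Siegel upper half-plane of degree two, and under this isomorphism the orthogonal modular group of $L$ contains a subgroup matching $\Gamma_0^{(2)}(N_g)$ with an explicit character. A meromorphic modular form on $\Orth^+(L)$ with the correct infinite product expansion will therefore yield the conjectured Siegel modular form on $\Gamma_0^{(2)}(N_g)$.

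The heart of the argument is the construction of a Borcherds input, namely a weakly holomorphic vector-valued modular form of weight $-1/2$ for the Weil representation of the discriminant form of $L$, built out of the Jacobi forms $\phi_{g^a}$ for $a \mid n_g$. Each $\phi_{g^a}$ admits a theta decomposition whose components are scalar modular forms on $\Gamma_0(N_{g^a})$, and such scalar forms correspond bijectively to vector-valued modular forms on $\SL_2(\ZZ)$ for the Weil representation of $U(N_g)$ via the standard dictionary for rescaled lattices. Combining these with the theta components of the $A_1$ factor, and symmetrizing over divisors $a$ in the manner forced by the Hecke-like structure of \eqref{eq:infprod}, produces a candidate input whose Fourier coefficients match the exponents required on the right-hand side of \eqref{eq:infprod}. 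Borcherds' theorem then yields a meromorphic modular form with the asserted product expansion; applying $-\log(1-x) = \sum_{a \geq 1} x^a/a$ to the standard Borcherds factors $(1-e^{2\pi i(\lambda,Z)})^{c(\lambda)}$ recovers \eqref{eq:infprod} exactly. The weight, divisor, and character are then read off the principal part of the input.

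The main obstacle is verifying that the assembled object really is a vector-valued modular form for the Weil representation of $L$, not merely a convergent formal sum of scalar forms. This requires a careful analysis of the transformation behaviour of $\phi_{g^a}$ under the Atkin--Lehner involutions of $\Gamma_0(N_g)$ and matching these with the action of $\Orth(L'/L)$ on the Weil representation, using the class function relation $c_{g^a} = c_{g^b}$ whenever $g^a$ and $g^b$ are conjugate in $M_{24}$, together with the explicit expressions for $\tilde{T}_g$ from \cite{Che10, GHV10a, GHV10b, EH11}. Once the input is correctly identified, Borcherds' theorem delivers modularity, convergence of the product, and the divisor structure essentially for free, and the divisor computation reduces to a finite bookkeeping exercise over the Heegner divisors corresponding to the principal parts of the input at each cusp.
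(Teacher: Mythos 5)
Your overall strategy coincides with the paper's: realize $\Phi_g$ as a Borcherds product on $U(N_g)\oplus U\oplus A_1$, with input a weight $-\frac{1}{2}$ vector-valued modular form for the Weil representation assembled from the twisted genera $\phi_{g^a}$. The assembly step you describe (theta decompositions, the rescaled-lattice dictionary, symmetrization over divisors, Atkin--Lehner bookkeeping) is exactly what the isomorphism $\hat{\mathbb{J}}$ of Theorem \ref{th:isomorphism} from \cite{WW22} packages in one stroke: it converts the tuple $(\phi_{g^d})_{d\mid N_g}$ with $\phi_{g^d}\in J^{\w}_{0,1}(\Gamma_0(N_g/d))$ (Lemma \ref{lem:Zg}) directly into an element of $M^!_{-1/2}(\rho_M)$, and Proposition \ref{prop:exp-product} then identifies the resulting product with \eqref{eq:infprod}. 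So that part of your plan is sound, if more laborious than necessary; it is not where the difficulty lies.

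The genuine gap is that you treat the final step as free: ``Borcherds' theorem delivers modularity, convergence of the product, and the divisor structure essentially for free.'' It does not. The multiplicative lift produces a single-valued meromorphic modular form only when the principal part of the input has \emph{integral} coefficients. Those coefficients are governed by the Fourier expansions of the $\phi_{g^d}$ at \emph{all} cusps of $\Gamma_0(N_g/d)$, not just at infinity, and these are a priori only rational (for class $3B$, for instance, the expansion of $\eta(3\tau)^8$ at the cusp $0$ begins with $\frac{1}{81}q^{1/9}$). Rationality only guarantees that some power $\Phi_g^{d_g}$ is a Borcherds product, and since the zeros and poles of $\Phi_g$ need not meet the region where \eqref{eq:infprod} converges, single-valuedness of $\Phi_g$ itself on all of $\HH_2$ is precisely the issue the paper flags as nontrivial. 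Its proof therefore computes the expansion of every $\phi_{g^d}$ at every cusp --- using Scheithauer's formulas for eta quotients and the quasimodularity of $E_2$ for the $E_2^{(N)}$ constituents of $\tilde T_g$ --- and verifies integrality case by case (Appendix A). Your proposal contains no mechanism for this verification, so as written it proves only the weaker statement that $\Phi_g^{d_g}$ is a Siegel modular form for some $d_g\in\NN$.
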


This conjecture was also motivated by the connection via wall-crossing between the $\frac{1}{4}$-BPS spectrum and the $\frac{1}{2}$-BPS spectrum in string theory. The regularized value of $1/\Phi_g$ at $z=0$ is a product of two copies of the twisted $\frac{1}{2}$-BPS partition functions $1/\eta_g$, where $\eta_g$ is the eta product determined by the cycle shape of $g$. As notations in \cite[Table 1]{CD12}, for $g$ of type $2A, 3A, 5A, 7AB$ the products $\Phi_g$ have interpretations as partition functions for $\frac{1}{4}$-BPS states twisted by a $\ZZ/p$ symmetry for $p = 2, 3, 5, 7$ \cite{JS06, DJS07,DN07,Sen10a, Sen10b, GK10, Gov12}. As pointed out by Cheng \cite{Che10}, a positive solution to this conjecture would provide strong evidence for the existence of the underlying $M_{24}$ symmetry and a general theory of twisted $\frac{1}{4}$-BPS spectrum. A modular construction of $\Phi_g$ has also appeared in \cite{GK09, EH12} for some particular conjugacy classes.  

When $g$ is of type $1A$, $\Phi_g$ is exactly the Igusa cusp form $\Phi_{10}$. Aoki and Ibukiyama \cite{AI05} proposed a construction of Siegel modular forms on $\Gamma_0^{(2)}(N)$ as Borcherds products of Jacobi forms of weight $0$ and index $1$ on $\Gamma_0(N)$ under certain assumptions. This construction was given in general by Cl\'{e}ry and Gritsenko \cite{CG11} who also extended this construction to congruence subgroups of paramodular groups of degree two. In this way Cl\'{e}ry and Gritsenko constructed the square roots of $\Phi_g$ for $g$ of type $2A, 3A, 4B$ and proved that they and $\Phi_{10}$ are the only Siegel modular forms on groups of type $\Gamma_0^{(2)}(N)$ which vanish precisely on the diagonal. Raum \cite{Rau13} proved Conjecture \ref{conj} for all conjugacy classes whose level is a prime power; more precisely, Raum showed that these $\Phi_g$ are products of rescalings of Borcherds products in the sense of Cl\'{e}ry and Gritsenko. Persson and Volpat \cite{PV14} gave a full proof of Conjecture \ref{conj} by generalizing  $\Phi_g$ to an analogue $\Phi_{g,h}$ for each commuting pair $g,h\in M_{24}$, and Govindarajan and Samanta \cite{GS21} extended Raum's method of proof to all conjugacy classes. 

In this paper we give a different proof of the conjecture. One important point is that we prove $\Phi_g$ extends to a single-valued meromorphic function on all $\HH_2$, which does not seem to be clear a priori as the zeros and poles of $\Phi_g$ do not necessarily intersect the region where the infinite product converges. We represent $\Phi_g$ as the Borcherds lift of an explicit vector-valued modular form for the Weil representation of $\Mp_2(\mathbb{Z})$. This allows us to explicitly compute the divisor of $\Phi_g$ and to determine when $\Phi_g$ can be constructed as an additive lift. This was unexpected before (e.g. \cite{PV14}). 

Our approach is based on an isomorphism recently established in \cite{WW22}:

\begin{theorem}[Theorem 5.1 of \cite{WW22}]\label{th:iso}
Let $U$ be the unique even unimodular lattice of signature $(1,1)$ and $L$ be an even positive-definite lattice of rank $\rank(L)$. Let $N$ be a positive integer.
There is an isomorphism between the space of weakly holomorphic modular forms of weight $k-\frac{1}{2}\rank(L)$ for the Weil representation of $\Mp_2(\ZZ)$ attached to $U(N)\oplus L$ and the space of tuples of Jacobi forms
$$
\bigoplus_{d|N} J_{k,L}^!(\Gamma_1(N/d)),
$$
where $J_{k,L}^!(\Gamma_1(t))$ is the space of weakly holomorphic Jacobi forms of weight $k$ and index $L$ on $\Gamma_1(t)$. 
\end{theorem}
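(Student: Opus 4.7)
My plan is to construct the isomorphism explicitly via theta decomposition, associating to each orbit of the natural action on the discriminant group of $U(N)$ a Jacobi form at the appropriate level.

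First I would analyze the underlying algebraic data. The discriminant group of $U(N) \oplus L$ decomposes as $(\ZZ/N\ZZ)^2 \oplus D_L$, where $D_L$ is the discriminant group of $L$, with quadratic form $q((a,b), \mu) = \frac{ab}{N} + q_L(\mu) \bmod \ZZ$, so the Weil representation splits as the external tensor product $\rho_{U(N)} \otimes \rho_L$. The orbits of the $\SL_2(\ZZ/N\ZZ)$-action on $(\ZZ/N\ZZ)^2$ are parameterized by divisors $d \mid N$: the orbit of $(d, 0)$ consists of all pairs $(a, b)$ with $\gcd(a, b, N) = d$, and a direct calculation shows that the stabilizer of $(d, 0)$ in $\SL_2(\ZZ)$ is exactly $\Gamma_1(N/d)$.

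The isomorphism then arises from theta decomposition. For a weakly holomorphic vector-valued form $F = \sum_\gamma F_\gamma \, e_\gamma$ for the Weil representation of $U(N) \oplus L$, I would set
$$
\phi_d(\tau, z) = \sum_{\mu \in D_L} F_{((d,0), \mu)}(\tau) \, \theta_{L, \mu}(\tau, z), \qquad d \mid N,
$$
where $\theta_{L, \mu}(\tau, z)$ is the Jacobi theta series attached to the coset $L + \mu$. The transformation behavior of $\theta_{L, \mu}$ under $\Mp_2(\ZZ)$ is governed by the dual Weil representation $\rho_L^*$; combining this with the transformation of $F_{((d,0), \mu)}$ under the stabilizer from the previous step should yield the standard Jacobi transformation law of weight $k$ and index $L$ on $\Gamma_1(N/d)$. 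Conversely, given a tuple $(\phi_d)$, the classical theta decomposition of each $\phi_d$ produces the components $F_{((d,0), \mu)}$, which are uniquely extended to the remaining orbit elements by pushing them around via the Weil representation and the transitive action on each orbit.

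The main obstacle will be verifying that the construction produces precisely Jacobi forms on $\Gamma_1(N/d)$ and not some larger or smaller congruence subgroup, and that the inverse map is well defined across orbits. This reduces to showing that the restriction of $\rho_{U(N)}$ to the stabilizer of $(d, 0)$ acts trivially on the chosen orbit representative, so that no nontrivial multiplier appears on $\Gamma_1(N/d)$. The delicate point is tracking the metaplectic sign at the generator $S$ via a Gauss sum computation, and checking that the extension to other orbit elements is consistent with the full $\Mp_2(\ZZ)$-equivariance of $F$ (rather than imposing hidden relations between different $d$'s). Once these compatibilities are verified, injectivity and surjectivity of the map $F \mapsto (\phi_d)_{d \mid N}$ follow formally by comparing dimensions orbit by orbit.
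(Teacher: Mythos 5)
The paper itself does not prove this statement --- it is quoted verbatim from \cite{WW22}, and the only proof-relevant content visible here is the explicit formula for $\hat{\mathbb{J}}$ in Theorem 2.3. That formula already exposes the ingredient your argument is missing. Your skeleton (decompose $D_{U(N)\oplus L}\cong(\ZZ/N\ZZ)^2\oplus D_L$, index orbits of $\SL_2(\ZZ/N\ZZ)$ on $(\ZZ/N\ZZ)^2$ by $d\mid N$ with stabilizer $\Gamma_1(N/d)$, then theta-decompose in the $L$-direction) is the right one, but the map you actually write down, $\phi_d=\sum_{\mu}F_{((d,0),\mu)}\theta_{L,\mu}$, is not the isomorphism, because $\rho_{U(N)}$ in the standard basis $\{\mathfrak{e}_{(a,b)}\}$ is \emph{not} the permutation representation of $\SL_2(\ZZ/N\ZZ)$: the generator $T$ acts diagonally with phases $\mathbf{e}(-ab/N)$ and $S$ sends each $\mathfrak{e}_{(a,b)}$ to a sum over all of $(\ZZ/N\ZZ)^2$, so the orbit--stabilizer analysis does not apply to the components $F_\gamma$ themselves. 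Concretely, take $d=N$: your recipe gives $\phi_N=\sum_\mu F_{((0,0),\mu)}\theta_{L,\mu}$ and asserts it is a Jacobi form on all of $\SL_2(\ZZ)$, but $\rho(S)\mathfrak{e}_{((0,0),\mu)}$ involves every component $\mathfrak{e}_{((c,b),\mu')}$, so $F_{((0,0),\mu)}(-1/\tau)$ is a linear combination of all the $F_{((c,b),\mu')}(\tau)$ and for generic $F$ your $\phi_N$ fails the transformation law already at level one.

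The missing step is a partial discrete Fourier transform in one coordinate of $U(N)'/U(N)$, i.e.\ passing to the basis $\tilde{\mathfrak{e}}_{(a,b)}=N^{-1/2}\sum_{c}\mathbf{e}(-ac/N)\,\mathfrak{e}_{(c,b)}$ (equivalently, a character-weighted sum over the isotropic subgroup $\{(c/N,0)\}$). In this basis $\rho_{U(N)}$ \emph{does} become the natural permutation representation of $\SL_2(\ZZ/N\ZZ)$ on $(\ZZ/N\ZZ)^2$ --- the signature-$(1,1)$ Gauss sum at $S$ is exactly what turns the $S$-matrix into a permutation --- and only then do your orbit, stabilizer, and theta-decomposition steps go through; this is why the paper's formula for $\hat{\mathbb{J}}$ carries the factor $\frac1N\sum_a e^{-2\pi iac/N}$ and uses the expansions of $\phi_d$ at all cusps $A_{a,b}$ rather than a single vector-valued component. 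Two further points: the restriction of $\rho_{U(N)}$ to the stabilizer of an orbit representative is trivial only in the transformed basis, so your ``no nontrivial multiplier on $\Gamma_1(N/d)$'' check must be done there; and your final appeal to ``comparing dimensions orbit by orbit'' cannot close the argument, since the spaces of weakly holomorphic forms are infinite-dimensional --- bijectivity has to come from exhibiting the inverse map, which is again the finite Fourier transform.
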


This isomorphism is used in \cite{WW22} to classify Borcherds products of singular weight and to prove the moonshine conjecture for the Conway group proposed by Borcherds \cite{Bor90, Bor92} and Scheithauer \cite{Sch04, Sch06}: all twists of the denominator identity of the fake monster Lie algebra by elements of Conway's group $\mathrm{Co}_0$ are the Fourier expansions of Borcherds products of singular weight on certain symmetric domains of type IV.

It is well known that $\Sp_4(\ZZ)$ can be realized as a subgroup of the orthogonal group $\Orth^+(2U\oplus A_1)$ and Siegel modular forms of degree two can be viewed as modular forms on orthogonal groups of signature $(3,2)$ (see e.g. \cite{GN98}). By applying Theorem \ref{th:iso} to the $A_1$ root lattice we identify the inputs into Borcherds' lift to $\Gamma_0^{(2)}(N_g)$ with certain sequences of weakly holomorphic Jacobi forms $(\phi_d)$, $d | N_g$, where each $\phi_d$ has level $\Gamma_0(N_g/d)$.

We prove the following form of Conjecture \ref{conj}:

\begin{theorem}\label{mth1}
Let $g$ be a conjugacy class of $M_{24}$ of cycle shape $\prod t^{b_t}$ and level $N_g$. Then $\Phi_g$ is a meromorphic Borcherds product of weight $k_g=\frac{1}{2}\sum_{t=1}^\infty b_t - 2$ on the lattice $U(N_g)\oplus U\oplus A_1$. The preimage of $\Phi_g$ under Borcherds' lift is the family $$\phi_{g^d}, \quad d | N_g.$$ The divisor of $\Phi_g$ is computed explicitly in Appendix A. In particular we find the following:
\begin{enumerate}
    \item $\Phi_g$ is holomorphic if and only if $N_g\leq 4$, i.e. if $g$ is of type $1A$, $2A$, $2B$, $3A$, $4B$; 
    \item $\Phi_g$ has a meromorphic square root on $\HH_2$ if and only if $g$ is of type $1A$, $2A$, $2B$, $3A$, $3B$, $4A$, $4B$, $4C$, $5A$, $6A$, $6B$, $10A$, $11A$.
\end{enumerate}
\end{theorem}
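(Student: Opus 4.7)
The plan is to realize $\Phi_g$ as Borcherds' multiplicative lift on the even lattice $M = U(N_g)\oplus U \oplus A_1$ of signature $(3,2)$. The orthogonal group $\Orth^+(M)$ contains a conjugate of $\Gamma_0^{(2)}(N_g)$ by the classical embedding of Siegel modular forms into orthogonal modular forms \cite{GN98}; since $U$ is unimodular, the discriminant form of $M$ agrees with that of $U(N_g)\oplus A_1$, and Borcherds' theorem represents meromorphic modular forms on $\Orth^+(M)$ with only rational quadratic divisors as lifts of weakly holomorphic vector-valued modular forms of weight $-1/2$ for the Weil representation $\rho_M$ of $\Mp_2(\ZZ)$.

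The first step is to identify the input via Theorem \ref{th:iso}. Taking $L = A_1$, $k=0$, $N=N_g$ there gives an isomorphism between such weight-$(-1/2)$ forms and the space $\bigoplus_{d\mid N_g} J^!_{0,A_1}(\Gamma_1(N_g/d))$. The natural candidate is the tuple $(\phi_{g^d})_{d\mid N_g}$ of twisted elliptic genera; one verifies case by case from the cycle-shape data of $M_{24}$ that $N_{g^d}\mid N_g/d$ for each $d\mid N_g$, so that the tuple lies in the right-hand space, and we denote by $F_g$ the corresponding vector-valued form.

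The crux of the argument is to show that the Borcherds lift of $F_g$ coincides with the formal product $\Phi_g$ defined in \eqref{eq:infprod}. Borcherds' formula writes the lift in the form
\[
q^{A}\zeta^{B}s^{C}\prod_{(n,r,m)>0}(1-q^n\zeta^r s^m)^{c_{F_g}(n,r,m)},
\]
where $c_{F_g}(n,r,m)$ is the Fourier coefficient of $F_g$ at a discriminant class determined by $(n,r,m)$. Under the isomorphism of Theorem \ref{th:iso} this coefficient is expressible through the Fourier coefficients of the $\phi_{g^d}$; after expanding each factor via $(1-x)^c = \exp(-\sum_{a\geq 1} c\, x^a/a)$ and regrouping by the conjugacy classes of the powers $g^d$, the Borcherds product takes precisely the double-sum shape of \eqref{eq:infprod}. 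The Weyl vector is $(1,1,1)$, producing the prefactor $q\zeta s$, and the Borcherds weight formula (half the constant coefficient of the zero component of $F_g$) evaluates to $k_g = \frac{1}{2}\sum_t b_t - 2$.

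For the divisor and the final assertions, Borcherds' theorem expresses the divisor of $\Phi_g$ as a linear combination of Heegner divisors whose multiplicities are the polar Fourier coefficients of $F_g$; these are explicit from the Fourier expansions of the $\phi_{g^d}$ and the definition \eqref{eq:infprod}. Translating Heegner divisors to Humbert surfaces on $\HH_2$ produces Appendix A, and assertions (1) and (2) are then the direct observations that all divisor multiplicities are non-negative only for the five classes with $N_g\leq 4$, respectively that they are all even only for the thirteen listed classes. The main obstacle will be the matching in the third step: making the isomorphism of Theorem \ref{th:iso} concrete enough to compute the Fourier coefficients of $F_g$ in terms of those of the $\phi_{g^d}$, and carrying out the M\"obius-type combinatorial reorganization on divisors of $N_g$ that converts the Borcherds product into the Hecke-like form of \eqref{eq:infprod}.
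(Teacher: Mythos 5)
Your overall strategy is the same as the paper's: take $L=A_1$, $k=0$, $N=N_g$ in Theorem \ref{th:iso}, let $F_g$ be the image of $(\phi_{g^d})_{d\mid N_g}$, match the resulting Borcherds product with the formal product \eqref{eq:infprod} via the $\exp$-$\log$ rewriting (this is exactly Proposition \ref{prop:exp-product}), read the weight off the constant term by M\"obius inversion, and read the divisor off the principal part. However, there is a genuine gap at the decisive step, which is precisely the point the paper identifies as nontrivial: you never verify that the principal part of $F_g$ is \emph{integral}. Borcherds' theorem (Theorem \ref{th:FJ-level-N}) requires integral polar coefficients for the product to define a single-valued meromorphic form; rationality of the coefficients of the $\phi_{g^d}$ only yields a denominator $d_g$ with $\Borch(d_gF_g)=\Phi_g^{d_g}$ well defined, which recovers the previously known statement that some power of $\Phi_g$ is modular but not the assertion $d_g=1$ of Theorem \ref{mth1}. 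Since the zeros and poles of $\Phi_g$ need not meet the domain of convergence of \eqref{eq:infprod}, single-valuedness of $\Phi_g$ itself cannot be inferred from the product expansion and must be established through the integrality of the principal part.

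The related flaw is your claim that the polar coefficients of $F_g$ ``are explicit from the Fourier expansions of the $\phi_{g^d}$ and the definition \eqref{eq:infprod}.'' The isomorphism $\hat{\mathbb{J}}$ builds the components $\mathfrak{e}_{(c/N,r/2t,b/N)}$ with $(b,c)\neq(0,0)$ out of the expansions $\phi_{g^d}\big|_{0,1}A_{a,b}$ at the \emph{other} cusps of $\Gamma_0(N_g/d)$, which are not visible in \eqref{eq:infprod}; these components carry both the integrality obstruction and most of the Humbert surfaces in Appendix A (all those with $(a,b)\not\equiv(0,0)$). The paper's proof closes exactly this gap: it writes each $\tilde T_{g^d}$ in terms of eta quotients and the forms $E_2^{(N)}$, computes their expansions at every cusp using Scheithauer's formulas for eta products and the quasimodularity of $E_2$, and then checks by explicit (computer-assisted) calculation that all polar coefficients listed in Appendix A are integers. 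Without this computation your argument establishes neither that $\Phi_g$ (as opposed to $\Phi_g^{d_g}$) is a Borcherds product nor the divisor data on which claims (1) and (2) rest.
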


Since each $\phi_{g^d}$ has rational Fourier expansion at infinity, \cite{WW22} shows that the associated vector-valued modular form also has rational Fourier coefficients and therefore there exists a denominator $d_g \in \mathbb{N}$ such that $\Phi_g^{d_g}$ is a meromorphic Borcherds product. The nontrivial point of Theorem \ref{mth1} is that we can take $d_g = 1$. The proof involves calculating the Fourier expansions of all $\phi_g$ at all cusps. Note that the divisors of $\Phi_g$ can be rather complicated, (e.g. for $g$ of type $12B$ and $21AB$), but these zeros and poles have quite low multiplicities. 

It was asked by Cheng--Duncan \cite{CD12} and Eguchi--Hikami \cite{EH12} whether $\Phi_g$ can be constructed as an additive lift of a Jacobi form when $g$ is one of the $12$ cycle shapes for which $\Phi_g$ has positive weight. We find a complete answer to this question here. Since most $\Phi_g$ have poles, we have to interpret this as the singular additive lift due to Borcherds \cite[Theorem 14.3]{Bor98}. The natural candidate for the input into the additive lift is the leading Fourier--Jacobi coefficient of $\Phi_g$,
\begin{equation}
\varphi_g(\tau,z):=\eta_g(\tau) \phi_{-2,1}(\tau,z),    
\end{equation}
which is a weak Jacobi form of weight $k_g$ and index $1$ on $\Gamma_0(N_g)$ with a Dirichlet character \cite{CD12}. By comparing divisors we show that the additive lift of $\varphi_g$ is a Borcherds product, which is related to (but does not always equal) $\Phi_g$.

\begin{theorem}\label{mth2}
The product $\Phi_g$ can be constructed as a Borcherds additive lift if and only if $g$ is of type $1A$, $2A$, $2B$, $3A$, $4A$, $4B$, $5A$, $6A$, $7AB$, $8A$. The input of the associated additive lift is the image of $(\varphi_d)_{d|N_g}$ under the isomorphism of Theorem \ref{th:iso}, where $\varphi_1=\varphi_g$ and $\varphi_d=0$ for $d>1$. When $g$ is of type $4C$ or $3B$, the following identity holds:
$$
\Phi_g \cdot \Borch(f_g) = \Grit(\varphi_g),
$$
where $f_g$ is a Jacobi form appearing in an identity established by Eguchi and Hikami \cite[\S 4]{EH12}
\begin{equation}
\phi_g(\tau,z) = -\frac{T_2\varphi_g(\tau,z)}{\varphi_g(\tau,z)} - f_g(\tau,z),    
\end{equation}
the form $\Borch(f_g)$ is the Borcherds multiplicative lift of the family $(\phi_1=f_g;\; \phi_d=0,\; d>1)$, and $\Grit(\varphi_g)$ is the Borcherds additive lift of the family $(\varphi_1=\varphi_g;\; \varphi_d=0,\; d>1)$. 
\end{theorem}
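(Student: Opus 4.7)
The plan is to compare $\Phi_g$ with the additive lift $\Grit(\varphi_g)$ directly as meromorphic Siegel modular forms on $\Gamma_0^{(2)}(N_g)$. The leading Fourier--Jacobi coefficient of $\Phi_g$ is $\varphi_g = \eta_g\phi_{-2,1}$, so any additive-lift representation of $\Phi_g$ via Theorem \ref{th:iso} (applied with $L = A_1$) must take an input tuple $(\varphi_d)_{d \mid N_g}$ whose first component is $\varphi_g$. Using the weight, index, and transformation constraints built into Theorem \ref{th:iso}, together with the fact that $\varphi_g$ already carries all singular contributions at the cusp at infinity, I would argue that the only consistent input is $(\varphi_1 = \varphi_g;\ \varphi_d = 0,\ d > 1)$. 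This identifies the additive-lift input in part (2) and reduces the remaining task to deciding when the equality $\Phi_g = \Grit(\varphi_g)$ actually holds.

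Next I would compute the divisor of $\Grit(\varphi_g)$ from the principal part of the vector-valued modular form attached to $(\varphi_g;\,0,\dots,0)$ using Borcherds' singular additive lift \cite{Bor98}, and compare it with the divisor of $\Phi_g$ tabulated in Appendix A (and established in Theorem \ref{mth1}). For each of the ten classes $1A, 2A, 2B, 3A, 4A, 4B, 5A, 6A, 7AB, 8A$ the divisors agree, so $\Phi_g/\Grit(\varphi_g)$ is a holomorphic Siegel modular form of weight $0$ with trivial character, hence constant; comparing leading Fourier--Jacobi coefficients pins this constant down to $1$. For the remaining classes the divisors disagree, and combined with the uniqueness of the input from the first paragraph this forces $\Phi_g$ not to be an additive lift of any tuple, completing the if-and-only-if in part (1).

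For $g$ of type $4C$ or $3B$, the divisorial discrepancy between $\Grit(\varphi_g)$ and $\Phi_g$ is exactly the divisor of a Borcherds product $\Borch(f_g)$ attached to an auxiliary weak Jacobi form $f_g$ of weight $0$ and index $1$ on $\Gamma_0(N_g)$. I would identify this $f_g$ with the one appearing in the Eguchi--Hikami identity
\begin{equation*}
\phi_g = -\frac{T_2\varphi_g}{\varphi_g} - f_g,
\end{equation*}
by interpreting the ratio $T_2\varphi_g/\varphi_g$ as the leading Fourier--Jacobi coefficient of $\Grit(\varphi_g)/\Phi_g$ and rewriting the identity as an identity of logarithms of Borcherds products. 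Matching the divisors of $\Phi_g\cdot\Borch(f_g)$ and $\Grit(\varphi_g)$, which now coincide by construction, and comparing leading terms then gives the stated equality.

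The main obstacle will be the divisor matching in the second step. The divisors in Appendix A involve Humbert-type components at various discriminants with nontrivial multiplicities, and the divisor of $\Grit(\varphi_g)$ must be read off from the full principal part of the vector-valued lift under the isomorphism of Theorem \ref{th:iso}, requiring careful bookkeeping over all cusps of $\Gamma_0(N_g)$. A secondary obstacle is the uniqueness argument for the input: one must rigorously exclude hidden additive-lift representations in which nonzero higher components $\varphi_d$ at divisors $d > 1$ conspire to cancel the discrepancy between $\Grit(\varphi_g)$ and $\Phi_g$, which relies on tight control of weakly holomorphic Jacobi forms on $\Gamma_1(N_g/d)$ of weight $k_g$ and index $1$.
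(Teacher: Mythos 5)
Your overall strategy---compare $\Phi_g$ (or $\Phi_g\cdot\Borch(f_g)$) with $\Grit(\varphi_g)$ by matching divisors, then conclude via Koecher's principle and the common leading Fourier--Jacobi coefficient $\varphi_g\cdot s$---is the same as the paper's. But there is a genuine gap in the central step: you propose to ``compute the divisor of $\Grit(\varphi_g)$ from the principal part of the vector-valued modular form'' via Borcherds' singular additive lift. That theorem (\cite[Theorem 14.3]{Bor98}, quoted here as Theorem \ref{th:additive}) determines only the \emph{polar} divisor of the additive lift; it says nothing about its zeros. So the principal-part computation gives you one containment (every pole of $\Grit(\varphi_g)$ lies on a pole of $\Phi_g\cdot\Borch(\hat F_g)$), but not the other: you must still prove that $\Grit(\varphi_g)$ vanishes to order at least $2$ along every Humbert surface where $\Phi_g\cdot\Borch(\hat F_g)$ vanishes --- the diagonal $z=0$, and for classes beyond $1A,2A,3A,4B$ also further surfaces (e.g.\ for $3B$ the discriminant $1/4$ and $1/36$ surfaces). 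Without this, the claim ``the divisors agree, so the ratio is holomorphic of weight $0$'' does not follow, and Koecher's principle cannot be invoked. The paper closes exactly this hole by computing the development coefficients $D_k^{r^\perp}\hat{\mathbb{J}}(\varphi_g)$ of the input along each relevant rational quadratic divisor $r^\perp$ and checking they vanish up to a Sturm bound (following \cite{W21}); this is the one idea your proposal is missing, and it is not mere bookkeeping.

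Two smaller points. First, your third paragraph treats the Eguchi--Hikami identity as if it could be ``rewritten as an identity of logarithms of Borcherds products'' to yield $\Phi_g\cdot\Borch(f_g)=\Grit(\varphi_g)$; but that identity only matches the \emph{second} Fourier--Jacobi coefficients of the two sides, which motivates the formula without proving it --- the actual proof is again the divisor comparison above, now with the poles of $\Borch(\hat F_g)$ (Appendix B) absorbing the discriminant-$5/36$ (resp.\ $3/16$-type) poles of $\Grit(\varphi_g)$ that $\Phi_g$ does not have. Second, your worry about ``hidden'' additive-lift inputs with nonzero components $\varphi_d$, $d>1$, is not needed for the theorem as stated: the additive lift in Theorem \ref{th:additive} is by definition taken with $\varphi_d=0$ for $d>1$, and its first Fourier--Jacobi coefficient is the input itself, which forces the candidate $\varphi_g=\eta_g\phi_{-2,1}$; the ``only if'' direction for the remaining classes is simply that $\varphi_g$ has non-positive weight, so no additive lift is defined.
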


The paper is organized as follows. In Section 2 we collect necessary materials to prove the theorems, including the isomorphism between Jacobi forms and vector-valued modular forms, and the Jacobi forms representations of Borcherds additive lifts and Borcherds products for Siegel modular forms of degree two. Theorems \ref{mth1} and \ref{mth2} are proved in Section 3. We formulate the principal parts of the inputs of $\Phi_g$ and $\Borch(f_g)$ in Appendix A and Appendix B, respectively. 

\section{Preliminaries}
\subsection{Modular forms for the Weil representation}
Let $M$ be an even integral lattice of signature $(l^+,l^-)$ with dual lattice $M'$. The lattice $M$ induces a discriminant form $D_M:=(M'/M, Q)$ with the quadratic form
$$
Q : M'/M \rightarrow \mathbb{Q}/\mathbb{Z}, \; Q(x + M) = (x,x)/2 + \mathbb{Z}.
$$
Let $\mathrm{Mp}_2(\mathbb{Z})$ be the metaplectic group consisting of pairs $A = (A, \psi_A)$, where $A = \begin{psmallmatrix} a & b \\ c & d \end{psmallmatrix} \in \mathrm{SL}_2(\mathbb{Z})$ and $\psi_A$ is a holomorphic square root of $\tau \mapsto c \tau + d$ on $\mathbb{H}$.  The product in $\Mp_2(\ZZ)$ is 
$$
(A,\psi_A)(B,\psi_B) = (AB,\psi_A(B\tau)\psi_B(\tau)).
$$
The group $\Mp_2(\ZZ)$ is generated by $T = (\begin{psmallmatrix} 1 & 1 \\ 0 & 1 \end{psmallmatrix}, 1)$ and $S = (\begin{psmallmatrix} 0 & -1 \\ 1 & 0 \end{psmallmatrix}, \sqrt{\tau})$. 
The representation of $\mathrm{Mp}_2(\mathbb{Z})$ on the group ring $\mathbb{C}[D_M] = \mathrm{span}(\mathfrak{e}_x: \; x \in D_M)$ defined by 
$$
\rho_M(T) \mathfrak{e}_x = \mathbf{e}(-Q(x)) \mathfrak{e}_x \quad \text{and} \quad \rho_M(S) \mathfrak{e}_x = \frac{\mathbf{e}( \mathrm{sign}(M) / 8)}{\sqrt{|D_M|}} \sum_{y \in D_M} \mathbf{e}(( x,y)) \mathfrak{e}_y
$$
is called the \emph{Weil representation} $\rho_M$,
where $\mathbf{e}(t)=e^{2\pi it}$ for $t\in \CC$, and $\mathrm{sign}(M)=l^+ - l^- \mod 8$. 

A \emph{weakly holomorphic modular form} of weight $k \in \frac{1}{2}\mathbb{Z}$ for the Weil representation $\rho_M$ is a holomorphic function $f : \mathbb{H} \rightarrow \mathbb{C}[D_M]$ that satisfies 
$$
f(A \cdot \tau) = \psi_A(\tau)^{2k} \rho_M(A) f(\tau), \quad \text{for all $A \in \mathrm{Mp}_2(\mathbb{Z})$,}
$$ 
and which is meromorphic at infinity; that is, $f$ has a Fourier expansion of the form 
\begin{equation*}
f(\tau) = \sum_{x \in D_M} \sum_{\substack{n \in \mathbb{Z} - Q(x)\\ n \gg -\infty}} \alpha(n, x) q^n \mathfrak{e}_x.    
\end{equation*}
The finite sum  
$$
\sum_{x\in D_M}\sum_{n<0} \alpha(n, x) q^n \mathfrak{e}_x
$$
is called the \textit{principal part} of $f$. 
The form $f$ is called a \textit{holomorphic modular form} if it is holomorphic at infinity, i.e. its principal part is zero. The $\mathbb{C}$-vector spaces of weakly holomorphic and holomorphic modular forms of weight $k$ for $\rho_M$ are respectively denoted by $M_k^!(\rho_M)$ and $M_k(\rho_M)$.

\subsection{Jacobi forms}
Let $\Gamma \le \mathrm{SL}_2(\mathbb{Z})$ be a congruence subgroup. A Jacobi form of weight $k \in \mathbb{Z}$, index $m \in \mathbb{N}$, level $\Gamma$ and character $\chi : \Gamma \rightarrow \mathbb{C}^{\times}$ is a holomorphic function $\phi : \mathbb{H} \times \mathbb{C} \rightarrow \mathbb{C}$ satisfying the transformations $$\phi\left( \frac{a \tau + b}{c \tau + d}, \frac{z}{c\tau + d} \right) = (c \tau + d)^k \chi \left( \begin{pmatrix} a & b \\ c &  d \end{pmatrix} \right) \exp\Big( 2\pi i \frac{mcz^2}{c \tau + d} \Big) \phi(\tau, z)$$ and $$\phi(\tau, z + \lambda \tau + \mu) = \exp \Big( -2\pi i m \lambda^2 \tau - 4\pi i m \lambda z \Big) \phi(\tau, z)$$ for all $\begin{psmallmatrix} a & b \\ c & d \end{psmallmatrix} \in \Gamma$ and all $\lambda, \mu \in \mathbb{Z}$, as well as certain growth conditions at ``cusps". For any $A = \begin{psmallmatrix} a & b \\ c & d \end{psmallmatrix} \in \mathrm{SL}_2(\mathbb{Z})$, the Fourier expansion of $\phi$ at the cusp $A \cdot \infty$ is determined up to a root of unity through \begin{align*} \phi \Big|_{k, m} A(\tau) &= (c \tau + d)^{-k} e^{-2\pi i \frac{m c z^2}{c \tau + d}} \phi\left( \frac{a \tau + b}{c \tau + d}, \frac{z}{c \tau + d} \right) \\ &= \sum_{n, r} c_A(n, r) q^n \zeta^r, \end{align*} where $q = e^{2\pi i \tau}, \;\zeta = e^{2\pi i z}$ and $c_{A}(n, r) \in \mathbb{C}.$ We call $\phi$ a \emph{weakly holomorphic} Jacobi form if $c_A(n, r) = 0$ for all $A$ and all sufficiently small $n$; a \emph{weak} Jacobi form if $c_A(n, r) = 0$ whenever $n < 0$; and a \emph{holomorphic} Jacobi form if $c_A(n,r) = 0$ whenever $r^2 > 4mn$.

The spaces of weakly holomorphic, weak, and holomorphic Jacobi forms of weight $k$, index $m$ and level $\Gamma$ are written $$J_{k, m}^!(\Gamma, \chi), \quad J_{k,m}^{\w}(\Gamma, \chi), \quad J_{k, m}(\Gamma, \chi).$$ 

\subsection{The isomorphism between Jacobi forms and vector-valued modular forms}

Modular forms for the Weil representation have a useful equivalent description in terms of sequences of Jacobi forms. This was worked out for Jacobi forms of lattice index in \cite{WW22} and we describe the special case of scalar-index Jacobi forms below. 

Let $t$ and $N$ be positive integers. We fix the lattice $M := U(N) \oplus A_1(t)$, such that $M'/M \cong \mathbb{Z}/2t\mathbb{Z} \times \mathbb{Z}^2 / N \mathbb{Z}$. We write elements of $M'/M$ as tuples $(a/N, b/2t, c/N)$ with $a,c \in \mathbb{Z}/N\mathbb{Z}$ and $b \in \mathbb{Z}/2t\mathbb{Z}$, such that the quadratic form on $M'/M$ is $$Q(a/N, b/2t, c/N) = b^2/4t + ac/N.$$

\begin{theorem}\label{th:isomorphism} 
Let $k\in \ZZ$. There is an isomorphism 
$$
\hat{\mathbb{J}}:\; \bigoplus_{d | N} J^!_{k, t}(\Gamma_1(N/d))  \stackrel{\sim}{\longrightarrow} M^!_{k - \frac{1}{2}}(\rho_{M})
$$ 
which sends a sequence of weakly holomorphic Jacobi forms
$$
(\phi_d)_{d|N}, \quad \phi_d \in J_{k,t}^!(\Gamma_1(N/d)) \quad \text{for $d|N$}
$$
to the weakly holomorphic modular form of weight $k-\frac{1}{2}$ for $\rho_M$ defined as
\begin{align*}
\hat{\mathbb{J}}((\phi_d)_{d | N}) = \frac{1}{N} \sum_{a, b, c \in \mathbb{Z}/N\mathbb{Z}} e^{-2\pi i a c / N} \sum_{n \in \mathbb{Q}} \sum_{r \in \mathbb{Z}/2t\mathbb{Z}} c_{a, b}(n, r) q^{n - r^2 / 4t} \mathfrak{e}_{(c/N, r/2t, b/N)},
\end{align*}
where for any $(a, b) \in \mathbb{Z}^2$ the numbers $c_{a,b}(n,r)$ are defined by the Fourier expansion
$$
\phi_d \Big|_{k, t} A_{a,b} (\tau, z) = \sum_{n \in \mathbb{Q}} \sum_{r \in \mathbb{Q}} c_{a,b}(n, r) q^n \zeta^r,
$$
where $d = \mathrm{gcd}(a, b, N)$ and $A_{a,b} \in \mathrm{SL}_2(\mathbb{Z})$ with $A_{a, b} (a,b) = (d, 0)$ mod $N$. 
\end{theorem}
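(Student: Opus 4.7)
The plan is to deduce Theorem \ref{th:isomorphism} from Theorem \ref{th:iso} of \cite{WW22} by specializing the positive-definite lattice $L$ to the rank-one lattice $A_1(t)$ and then translating the resulting statement about lattice-indexed Jacobi forms into the classical language of scalar-index Jacobi forms of index $t$. The identification $A_1(t)'/A_1(t) \cong \ZZ/2t\ZZ$ makes the two direct sums of Jacobi forms match termwise, so the task reduces to unwinding the general construction of \cite{WW22} in this case and verifying that it agrees with $\hat{\mathbb{J}}$ as displayed.

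First I would check that the formula is well-defined. The coefficients $c_{a,b}(n,r)$ depend a priori on a choice of matrix $A_{a,b}$ sending $(a,b) \mapsto (d,0) \bmod N$, but two such matrices differ by left multiplication by an element of the stabilizer of $(d,0)$ in $\SL_2(\ZZ/N\ZZ)$, which lifts into $\Gamma_1(N/d)$ -- precisely the invariance group of $\phi_d$. Applying the theta decomposition of $\phi_d|_{k,t} A_{a,b}$ in the elliptic variable rewrites the inner sum $\sum_r c_{a,b}(n,r) q^{n - r^2/4t} \zeta^r$ as a combination of theta functions of $A_1(t)$ with theta coefficients depending only on $\tau$; the finite Fourier transform $\sum_a e^{-2\pi i ac/N}$ in the displayed formula then assembles these coefficients into the components of a vector in $\CC[D_M] \cong \CC[(\ZZ/N\ZZ)^2] \otimes \CC[\ZZ/2t\ZZ]$ indexed by $(c/N, r/2t, b/N)$.

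Next I would verify the transformation laws for the generators $T, S \in \Mp_2(\ZZ)$. Under $T$ the exponent in $q^{n - r^2/4t}$ produces the phase $\exp(-2\pi i\, r^2/4t)$, while the change of variable $a \mapsto a - b$ in the outer sum produces $\exp(-2\pi i\, bc/N)$; together these yield $\rho_M(T)$. Under $S$ one must show that the reshuffling of cusp expansions corresponding to $A_{a,b} \mapsto S A_{a,b}$, combined with the twist $\sum_a e^{-2\pi i ac/N}$, reproduces the tensor product of the normalized discrete Fourier transform $\rho_{U(N)}(S)$ on $(\ZZ/N\ZZ)^2$ with the classical $\rho_{A_1(t)}(S)$ on $\ZZ/2t\ZZ$.

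The main obstacle is precisely this $S$-transformation, and in particular the matching of the $\Mp_2(\ZZ)$-orbits on $D_{U(N)} = (\ZZ/N\ZZ)^2$, which are indexed by divisors $d \mid N$, with the corresponding Jacobi forms $\phi_d$ on $\Gamma_1(N/d)$: one must track how the various cusp expansions of $\phi_d$ are permuted by $S$ and check that the contribution of each $\phi_d$ is supported on the appropriate orbit, which reduces to a somewhat intricate Gauss-sum identity. Once this is in hand, injectivity of $\hat{\mathbb{J}}$ follows by inverting the finite Fourier transform to recover the theta coefficients of each $\phi_d$ from the image, and surjectivity follows by running the construction backwards: any $f \in M^!_{k - 1/2}(\rho_M)$ restricted to the components supported on the orbit indexed by $d$ inverts, via theta decomposition, into a well-defined weakly holomorphic Jacobi form on $\Gamma_1(N/d)$.
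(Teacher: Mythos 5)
Your proposal is correct and takes essentially the same route as the paper: the paper's entire proof of Theorem \ref{th:isomorphism} is the observation that it is the special case $L = A_1(t)$ of the lattice-index isomorphism of \cite{WW22} (with $\hat{\mathbb{J}}$ being the inverse of the map called $\mathbb{J}$ there), which is precisely your opening step. The direct verification you sketch afterwards (well-definedness of $c_{a,b}$, theta decomposition, the $T$- and $S$-transformations and the attendant Gauss-sum identity) is an outline of the argument carried out in \cite{WW22} itself rather than in this paper.
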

\begin{proof} This is a special case of \cite{WW22}. Note that $\hat{\mathbb{J}}$ is the inverse of the isomorphism labelled $\mathbb{J}$ there.
\end{proof}

The above map $\hat{\mathbb{J}}$ induces an isomorphism between spaces of holomorphic forms. 
The image of the subspace $\bigoplus_{d | N} J^!_{k, t}(\Gamma_0(N/d))$ under $\hat{\mathbb{J}}$ consists of those vector-valued modular forms $$F(\tau) = \sum_{x \in M'/M} f_x(\tau) \mathfrak{e}_x$$ with the property $f_{(a/N, b/2t, c/N)} = f_{(au/N, b/2t, cu^{-1}/N)}$ for every $u \in (\mathbb{Z}/N\mathbb{Z})^{\times}$.

\subsection{Siegel modular forms of degree two}
Let $\Gamma^{(2)}$ be an arithmetic subgroup of the symplectic group  $\mathrm{Sp}_4(\mathbb{R})$. An important class of examples for $\Gamma^{(2)}$ will be the following congruence subgroups of the paramodular group of level $t$:
$$
K_0(t,N)=\left\{ \begin{pmatrix} * & *t & * & * \\ * & * & * & */t \\ *N & *Nt & * & * \\ *Nt & *Nt & *t & * \end{pmatrix} \in \Sp_4(\QQ): \text{all $*$ are integers} \right\}.     
$$
Note that $K_0(1,1)=\Sp_4(\ZZ)$, $K_0(1,N)=\Gamma_0^{(2)}(N)$ is the congruence subgroup of $\Sp_4(\ZZ)$ appearing in Conjecture \ref{conj}, and $K_0(t,1)=K(t)$ is the usual paramodular group of level $t$. When $t>1$, the normal double extension $K_0(t,N)^+=K_0(t,N)\cup K_0(t,N)V_t$ also gives an example of $\Gamma^{(2)}$, where $V_t$ is the Fricke involution
$$
V_t=\frac{1}{\sqrt{t}}\begin{psmallmatrix} 0 & t & 0 & 0 \\ 1 & 0 & 0 & 0 \\ 0 & 0 & 0 & 1 \\ 0 & 0 & t & 0 \end{psmallmatrix}, \quad t\geq 1.
$$
$\Gamma^{(2)}$ acts on the Siegel upper half-space 
$$
\mathbb{H}_2 = \left\{Z = \begin{pmatrix} \tau & z \\ z & \omega \end{pmatrix} \in \mathrm{Mat}_2(\CC) : \mathrm{Im}(Z)>0\right\}
$$
by M\"obius transformations. A \emph{meromorphic Siegel modular form} of weight $k \in \frac{1}{2}\ZZ$ for $\Gamma^{(2)}$ with a character (or multiplier system) $\chi: \Gamma^{(2)} \to \CC^\times$ is a meromorphic function $F : \mathbb{H}_2 \rightarrow \mathbb{C}$ satisfying 
$$
F\left( A \cdot Z \right) = \chi(A) \mathrm{det}(cZ + d)^k F(Z), \quad A=\begin{pmatrix} a & b \\ c & d \end{pmatrix} \in \Gamma^{(2)}.
$$ 
If $V_t\in \Gamma^{(2)}$ then we have
$$
F\left( \begin{pmatrix} t\omega & z \\ z & \tau/t \end{pmatrix} \right) = (-1)^k\chi(V_t) F\left( \begin{pmatrix} \tau & z \\ z & \omega \end{pmatrix} \right).  
$$
    
\subsection{Singular Gritsenko lifts and Borcherds products}

In \cite{Bor98} Borcherds defined the singular theta lift and used it to find two constructions of meromorphic modular forms on orthogonal groups of lattices of signature $(l, 2)$. The first construction yields modular forms, called \emph{Borcherds products}, with special divisors and an infinite product expansion at every $0$-dimensional cusp. The second construction is a generalization of the Gritsenko lift which produces meromorphic modular forms of general weight $k \in \mathbb{N}$, each with poles of order exactly $k$ along special divisors. Both of these lifts take weakly holomorphic modular forms for the Weil representation as inputs.

In this subsection we want to describe the two singular theta lifts in special case of Siegel modular forms of degree two which can be identified with modular forms on orthogonal groups of signature $(3,2)$. Through the isomorphism $\hat{\mathbb{J}}$ in Theorem \ref{th:isomorphism}, the input forms into both lifts are understood as sequences of weakly holomorphic Jacobi forms.

The poles of singular Gritsenko lifts, and the zeros and poles of Borcherds products, are known explicitly and (for general lattices) lie on hyperplanes. Through the identification with the Siegel upper half-space these divisors become the classical Humbert surfaces. Let $a$, $n$, $r$, $m$ and $b$ be integers satisfying $\mathrm{gcd}(a,n,r,m,b)=1$ and
$$
\delta = \frac{ab}{N} - nm + \frac{r^2}{4t} >0.
$$
The Humbert surface (for $K_0(t, N)$) associated to this data is
$$
H(a,n,r,m,b)=\Big\{ \begin{pmatrix} \tau & z \\ z & \omega \end{pmatrix} \in \HH_2 : \frac{a}{N} + n\tau + rz + tm\omega + bt\cdot \det(Z) = 0 \Big\}.
$$
The number $\delta$ is called the discriminant of $H(a,n,r,m,b)$. Note that $K_0(t, N)$ maps Humbert surfaces to Humbert surfaces and preserves the discriminant.

We now describe the two theta lifts. We will only consider the special case that all Jacobi forms $\phi_d$ are $\Gamma_0(N/d)$-modular, because only this case is needed later. Under this assumption, the lifts are modular forms for the paramodular group $K_0(t, N)$ (possibly with characters). Additionally, the Fourier expansions of the lifts (but not the divisors) can be read off of the Fourier expansions of $\phi_d$ directly. Both theorems below are a special case of \cite{WW22} which describes theta lifts more generally in terms of Jacobi forms of lattice index. 

The Fourier--Jacobi expansions of the theta lifts involve certain Hecke operators. Let $\chi$ be a Dirichlet character modulo $D$ and $\phi \in J_{k, t}^!(\Gamma_0(D), \chi)$ be a weakly holomorphic Jacobi form with Fourier expansion at infinity 
$$
\phi(\tau, z) = \sum_{n \in \mathbb{Z}} \sum_{r \in \mathbb{Z}} c(n, r) q^n \zeta^r.
$$ 
For any positive integer $m$ the \emph{index-raising Hecke operators} $T^{(D)}_-(m)$ are defined by 
$$
\phi \Big|_{k, t} T_{-}^{(D)}(m)(\tau, z) = \frac{1}{m} \sum_{\substack{a, d \in \mathbb{N} \\ ad = m \\ (a, D) = 1}} \sum_{b \in \mathbb{Z}/d\mathbb{Z}} a^k \chi(a) \phi \Big( \frac{a \tau + b}{d}, az \Big).
$$ 
In particular 
$$
\phi \Big|_{k, t} T_{-}^{(D)}(m)(\tau, z) = \sum_{n \in \mathbb{Z}} \sum_{r \in \mathbb{Z}} \sum_{\substack{a \in \mathbb{N} \\ (a, D) = 1 \\ a | \mathrm{gcd}(n,r,m)}} a^{k-1} \chi(a) c \left( \frac{n m}{a^2}, \frac{r}{a} \right) q^n \zeta^r.
$$ 
By \cite[Lemma 2.1]{CG11}, we have
$$
\phi \Big|_{k, t} T_{-}^{(D)}(m) \in J_{k, mt}^!(\Gamma_0(D), \chi), \quad m\geq 1.
$$
Following \cite[Theorems 7.1, 9.3]{Bor95} and \cite[Theorem 14.3]{Bor98}, when $k\geq 0$ we define the index-zero Hecke operator by 
$$
\phi \Big|_{k, t} T_{-}^{(D)}(0)(\tau, z) = c_{\phi} + \sum_{\substack{n,r\in\ZZ\\ (n,r)>0}} \sum_{\substack{a \in \mathbb{N} \\ (a, D) = 1 \\ a | \mathrm{gcd}(n, r)}} a^{k-1} \chi(a) c(0, r/a) q^n \zeta^r,
$$ 
where $c_{\phi}$ is a certain constant (depending on $\phi$, involving Bernoulli numbers), such that the above sum defines a meromorphic Jacobi form of weight $k$ and index $0$ for $\Gamma_0(D)$ with character $\chi$. If all coefficients $c(0,r)$ of $\phi$ are zero, then $\phi \big|_{k, t} T_-^{(D)}(0)$ is identically zero. 

Let $\phi \in J_{k, t}^!(\Gamma_0(D),\chi)$ with $k\geq 0$. We define the \emph{formal additive lift} of $\phi$ as
$$
\mathbf{G}(\phi)\left( \begin{pmatrix} \tau & z \\ z & w \end{pmatrix} \right) = \sum_{m=0}^{\infty} \phi \Big|_{k, t} T_-^{(D)}(m)(\tau, z) e^{2\pi imt\omega}.
$$

\begin{theorem}\label{th:additive} 
Let $M = U(N) \oplus A_1(t)$ with quadratic form $Q$. Let $\phi \in J_{k, t}^!(\Gamma_0(N),\chi)$, where $k$ is a positive integer and $\chi$ is a Dirichlet character modulo $N$. Then the additive lift $\Grit(\phi)$ defines a meromorphic Siegel modular form of weight $k$ for $K_0(t,N)^+$ with a character induced by $\chi$, and this form is always symmetric, i.e. $\Grit(\phi)(V_t\cdot Z)=\Grit(\phi)(Z)$. 

We define $\phi_1=\phi$, $\phi_d=0$ for $d>1$ and write 
$$
\hat{\mathbb{J}}(\phi):=\hat{\mathbb{J}}((\phi_d)_{d | N}) = \sum_{\gamma \in M'/M} \sum_{n \in \mathbb{Z} - Q(\gamma)} \alpha(n, \gamma) q^n \mathfrak{e}_{\gamma}.
$$ 
Then the only singularities of $\Grit(\phi)$ are poles of order $k$ along the Humbert surfaces $H(a,n,r,m,b)$ of discriminant $\delta=ab/N-nm+r^2/4t$ for which 
$$
\sum_{\lambda=1}^\infty \lambda^{-k} \alpha\left(-\lambda^2\delta, \Big( \frac{\lambda a}{N}, \frac{\lambda r}{2t}, \frac{\lambda b}{N} \Big)\right) \neq 0. 
$$
\end{theorem}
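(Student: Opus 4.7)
The plan is to reduce the theorem to Borcherds' general singular additive theta lift \cite[Theorem 14.3]{Bor98} applied to the vector-valued modular form $F := \hat{\mathbb{J}}(\phi) \in M_{k-\frac{1}{2}}^!(\rho_M)$, where $M = U(N) \oplus A_1(t)$, after identifying $L := U \oplus M$ (of signature $(3,2)$) with the lattice underlying the orthogonal model of Siegel modular forms of degree two. Under the accidental isomorphism between a cover of $\Sp_4(\RR)$ and $\Orth^+(L\otimes \RR)$, the Siegel upper half-space $\HH_2$ becomes the tube domain attached to $L$, and a Humbert surface $H(a,n,r,m,b)$ of discriminant $\delta$ corresponds to the rational quadratic divisor $\lambda^{\perp}$ attached to a primitive vector $\lambda \in L'$ of positive norm $2\delta$.

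First, the general theta lift produces a meromorphic modular form on the discriminant kernel $\tilde{\Orth}^+(L)$ of weight $k$. Using the dictionary between orthogonal and paramodular groups developed in \cite{GN98, CG11}, this kernel is exactly $K_0(t, N)$; the Fricke involution $V_t$ is induced by the lattice automorphism exchanging the two generators of $U(N)$ while acting trivially on $U \oplus A_1(t)$, so the lift is invariant under $V_t$ and descends to $K_0(t,N)^+$. The character $\chi$ arises from the transformation of $F$ under the stabilizer of the $0$-dimensional cusp used for expansion, which is encoded naturally by the isomorphism $\hat{\mathbb{J}}$ of Theorem \ref{th:isomorphism}.

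Second, I would match the Fourier--Jacobi expansion of the theta lift with the Hecke sum defining $\Grit(\phi)$. Expanding at the cusp attached to the hyperbolic plane $U$ yields a sum indexed by vectors of $M'$; collecting the $m$-th Fourier--Jacobi coefficient and writing each such vector as $a$ times a primitive vector recovers precisely the formula for $\phi|_{k,t} T_-^{(N)}(m)$. The condition $(a, N) = 1$ in the Hecke operator corresponds to primitivity in the $U(N)$-direction of the lattice vector, which is exactly where the hypothesis $\phi_d = 0$ for $d > 1$ is used in the identification via $\hat{\mathbb{J}}$. The constant term $c_\phi$ of the $m = 0$ component arises from the regularization of the theta integral and involves Bernoulli numbers of $\chi$, in the manner familiar from the classical Gritsenko construction.

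For the divisor, Borcherds' theory places poles of order $k$ along each $\lambda^{\perp}$ for primitive $\lambda \in L'$ of positive norm, with multiplicity given by a series in the principal-part coefficients of $F$. Writing $\lambda = (a/N, n, r, tm, b)$ with $\gcd(a,n,r,m,b) = 1$ and parametrizing its positive multiples by an integer $\lambda \geq 1$ produces exactly the sum $\sum_{\lambda \geq 1} \lambda^{-k} \alpha(-\lambda^2\delta, (\lambda a/N, \lambda r/2t, \lambda b/N))$ of the statement, with the weight factor $\lambda^{-k}$ coming from the regularized singular integral. The main obstacle will be the careful matching of normalizations: reconciling the Hecke-operator coprimality convention $(a, N) = 1$ with the lattice-vector primitivity used in Borcherds' expansion, and verifying that the constant $c_\phi$ produced by the regularized theta integral coincides with the one making $\Grit(\phi)$ a genuine (rather than merely formal) modular form. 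Once these bookkeeping points are resolved, the theorem follows by invoking \cite[Theorem 14.3]{Bor98} in the framework of \cite{WW22}.
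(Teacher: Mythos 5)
Your overall strategy---realize $\Grit(\phi)$ as Borcherds' singular additive lift of $\hat{\mathbb{J}}(\phi)$ on the signature $(3,2)$ lattice $U(N)\oplus U\oplus A_1(t)$ and read off weight, Fourier--Jacobi expansion and singularities from \cite[Theorem 14.3]{Bor98}---is the same as the paper's, whose proof is a short citation: modularity via \cite[Remark 5.15]{WW22} (a repackaging of Borcherds' lift in Jacobi-form language) together with the group-theoretic input from \cite[\S 2]{HWK21}, and the divisor statement via \cite[Theorem 14.3]{Bor98}.

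There is, however, one genuine gap. You assert that the discriminant kernel of $\Orth^+(L)$ ``is exactly $K_0(t,N)$.'' This is false for $N>2$: as the paper records, $K_0(t,N)/\{\pm I\}$ is generated by the discriminant kernel \emph{together with} a copy of $(\ZZ/N\ZZ)^\times$ acting nontrivially on $M'/M\cong \ZZ/2t\ZZ\times(\ZZ/N\ZZ)^2$, so the discriminant kernel is a proper subgroup. Borcherds' theta lift of an arbitrary $F\in M^!_{k-1/2}(\rho_M)$ is automatically modular only under the discriminant kernel; with your identification you therefore obtain modularity only under a proper subgroup of $K_0(t,N)$. The missing step is that $\hat{\mathbb{J}}(\phi)$ is invariant, up to the character induced by $\chi$, under this extra $(\ZZ/N\ZZ)^\times$-action---this is exactly where the hypothesis $\phi\in J^!_{k,t}(\Gamma_0(N),\chi)$ (rather than merely $\Gamma_1(N)$) is used, via \cite[Corollary 5.2]{WW22}, equivalently via the symmetry $f_{(a/N,\,b/2t,\,c/N)}=f_{(au/N,\,b/2t,\,cu^{-1}/N)}$ noted after Theorem \ref{th:isomorphism}. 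Your attribution of the character to the stabilizer of the $0$-dimensional cusp does not supply this, since that stabilizer only sees the Fourier--Jacobi/Heisenberg transformations. A smaller inaccuracy: the Fricke involution $V_t$ cannot be ``exchanging the two generators of $U(N)$ while acting trivially on $U\oplus A_1(t)$,'' as such an isometry fixes the cusp attached to $U$ and cannot swap $\tau/t$ with $t\omega$; it interchanges the two hyperbolic planes $U(N)$ and $U$ (after rescaling). The remaining bookkeeping you describe---matching the Fourier--Jacobi coefficients with the operators $T_-^{(N)}(m)$ and parametrizing multiples of a primitive lattice vector to obtain the multiplicity series $\sum_{\lambda\ge 1}\lambda^{-k}\alpha(-\lambda^2\delta,\cdot)$---is consistent with the paper.
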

\begin{proof}
By \cite[Corollary 5.2]{WW22}, the vector-valued modular form $\hat{\mathbb{J}}(\phi)$ is invariant under $(\ZZ/N\ZZ)^\times$ up to some character induced by $\chi$. It is known that $K_0(t,N)/\{\pm I\}$ is isomorphic to the group generated by $(\ZZ/N\ZZ)^\times$ and the discriminant kernel of the signature $(3,2)$ lattice $U(N)\oplus U\oplus A_1(t)$ (see e.g. \cite[\S 2]{HWK21}). Therefore, we obtain the modularity of $\Grit(\phi)$ by applying \cite[Remark 5.15]{WW22} to the rank-one lattice $L=A_1(t)$. The property of singularities follows from \cite[Theorem 14.3]{Bor98}.
\end{proof}

\begin{theorem}\label{th:FJ-level-N}
Let $M = U(N) \oplus A_1(t)$ with quadratic form $Q$. 
Let $\phi_d \in J_{0, t}^!(\Gamma_0(N/d))$, $d | N$ be weakly holomorphic Jacobi forms of weight zero with Fourier expansions 
$$
\phi_d(\tau, z) = \sum_{n \in \mathbb{Z}} \sum_{r \in \mathbb{Z}} c_d(n, r) q^n \zeta^r,
$$ 
and suppose the principal part of
$$
\hat{\mathbb{J}}((\phi_d)_{d|N}) = \sum_{\gamma \in M'/M} \sum_{n \in \mathbb{Z}-Q(\gamma)} \alpha(n, \gamma) q^n \mathfrak{e}_{\gamma}
$$ 
is integral, i.e. $\alpha(n,r)\in\ZZ$ for $n<0$.  Then there is a meromorphic modular form $\Psi=\Borch((\phi_d)_{d | N})$ for the group $K_0(t, N)^+$, the \emph{Borcherds product}, with the following properties: 
\begin{itemize}
\item[(1)] When $\tau$ and $w$ have sufficiently large imaginary part, $\Psi$ is given by the infinite product 
\begin{align*} 
\Psi(Z) &= q^A \zeta^B s^C \prod_{(n,r, m) > 0} (1 - q^n \zeta^{r} s^{tm})^{\mathrm{mult}(n,r,m)} \\ &= q^A \zeta^B s^C \prod_{d | N} \prod_{(n, r, m) > 0} (1 - q^{dn} \zeta^{dr} s^{dtm})^{\mathrm{mult}_d(nm, r)} 
\end{align*} 
where $s=e^{2\pi i\omega}$ and $(n,r,m) > 0$ means either $n > 0$, or $n=0$ and $m > 0$, or $n=m=0$ and $r < 0$;  let $\mu$ denote the M\"obius function;
$$
\mathrm{mult}(n, r, m) = \sum_{\substack{bd > 0 \\ bd | \mathrm{gcd}(n,r,m,N)}} \frac{\mu(b)}{bd} c_d \left( \frac{nm}{b^2 d^2}, \frac{r}{bd} \right);$$ 
$$
\mathrm{mult}_d(n, r) = \sum_{t | d} \frac{\mu(d/t)}{d} c_t(n, r);
$$ 
and where $$A = \frac{1}{24} \sum_{r \in \mathbb{Z}} c_N(0,r), \quad B = \frac{1}{2} \sum_{r > 0} c_N(0, r) r, \quad C = \frac{1}{4} \sum_{r \in \ZZ} c_N(0, r) r^2$$ is the \emph{Weyl vector}.

\item[(2)] $\Psi$ has the Fourier--Jacobi expansion 
$$
\Psi(Z) = \Theta(\tau, z) s^C \cdot \exp \Big( - \sum_{d | N} d^{-1} \mathbf{G}(\phi_d)(dZ) \Big),
$$ 
in which the leading Fourier--Jacobi coefficient $\Theta$ is the generalized theta block 
$$
\Theta(\tau, z) = \prod_{d | N} \Big[ \eta(d \tau)^{\mathrm{mult}_d(0,0)} \prod_{r=1}^{\infty}  \Big(\frac{\vartheta(d \tau, drz)}{\eta(d \tau)} \Big)^{\mathrm{mult}_d(0,r)} \Big],
$$ 
where as usual 
$$
\eta(\tau) = q^{1/24} \prod_{n=1}^{\infty} (1 - q^n) = \sum_{n=1}^{\infty} \left( \frac{12}{n} \right) q^{n^2 / 24}
$$ 
and 
\begin{align*}
\vartheta(\tau, z) &= q^{1/8} (\zeta^{1/2} - \zeta^{-1/2}) \prod_{n=1}^{\infty} (1 - q^n \zeta)(1 -q^n)(1-q^n \zeta^{-1})\\
&= \sum_{n=1}^{\infty} \left( \frac{4}{n} \right) q^{n^2 / 8} (\zeta^{n/2} - \zeta^{-n/2})    
\end{align*}

\item[(3)] $\Psi$ has weight 
$$
\frac{\alpha(0,0)}{2} = \frac{1}{2} \sum_{d | N} \mathrm{mult}_d(0,0).
$$

\item[(4)] All poles and zeros of $\Psi$ lie on Humbert surfaces. The multiplicity of $\Psi$ along the Humbert surface $H(a,n,r,m,b)$ of discriminant $\delta$ is the sum 
$$
\sum_{\lambda=1}^\infty \alpha\left(-\lambda^2\delta, \Big( \frac{\lambda a}{N}, \frac{\lambda r}{2t}, \frac{\lambda b}{N} \Big)\right). 
$$

\item[(5)] We have the duality 
$$
\Psi\left( \begin{psmallmatrix} t\omega & z \\ z & \tau/t    
\end{psmallmatrix} \right)= (-1)^{D_0} \Psi\left( \begin{psmallmatrix} \tau & z \\ z & \omega    
\end{psmallmatrix} \right),
$$
where $D_0$ is the multiplicity of $\Psi$ along the divisor $H(0,-1,0,1,0)$, i.e.
$$
D_0 = \frac{1}{N} \sum_{d|N} \sum_{\lambda=1}^\infty \varphi(N/d)c_d(-\lambda^2,0),
$$
here $\varphi(-)$ is the Euler’s totient function. 
\end{itemize}
\end{theorem}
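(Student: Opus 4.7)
The plan is to reduce everything to Borcherds' singular theta lift \cite[Theorem 13.3]{Bor98} applied to a vector-valued modular form on the rank-$5$ lattice $U \oplus M$ of signature $(3,2)$, where $M = U(N) \oplus A_1(t)$. First I would apply the isomorphism $\hat{\mathbb{J}}$ of Theorem \ref{th:isomorphism} to $(\phi_d)_{d|N}$ to obtain $F \in M^!_{-1/2}(\rho_M)$ of the correct weight $1 - l^+/2 = -1/2$ to serve as Borcherds' input; the hypothesis of integrality of the principal part passes directly to $F$, guaranteeing that the resulting lift is a single-valued meromorphic function. Next I would identify the orthogonal modular form produced by Borcherds with a Siegel modular form for $K_0(t,N)^+$, using the accidental isogeny between $\Sp_4$ and $\mathrm{Spin}(3,2)$ together with the fact (see \cite[\S 2]{HWK21}, as invoked in Theorem \ref{th:additive}) that $K_0(t,N)^+/\{\pm I\}$ corresponds to the subgroup of $O^+(U \oplus M)$ generated by the discriminant kernel, the image of $(\mathbb{Z}/N\mathbb{Z})^\times$, and the Fricke involution $V_t$. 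Under this dictionary, Humbert surfaces in $\mathbb{H}_2$ correspond to Heegner divisors on the orthogonal domain, so (4) becomes a direct translation of Borcherds' divisor formula and (3) follows from the general weight formula $k = \alpha(0,0)/2$.

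The explicit product expansion in (1) is obtained by unwinding Borcherds' formula at a $0$-dimensional cusp and stratifying the exponent according to $d = \mathrm{gcd}(n,r,m,N)$. By construction of $\hat{\mathbb{J}}$, the Fourier coefficients of the component $\mathfrak{e}_{\gamma}$ of $F$ for $\gamma = (a/N, r/2t, b/N)$ are expressible in terms of the Fourier expansions of the $\phi_d$ at the various cusps of $\Gamma_0(N/d)$; a Möbius inversion in $b$ then collapses the exponent into the formula for $\mathrm{mult}(n,r,m)$, and regrouping first by $d \mid N$ gives the alternative formula involving $\mathrm{mult}_d$. The Weyl vector $(A,B,C)$ comes from the constant term of $F$, to which only the $d = N$ component $\phi_N$ contributes via its $q^0$-Fourier coefficients, producing the closed expressions for $A$, $B$, $C$ in terms of the $c_N(0,r)$. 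The Fourier--Jacobi expansion in (2) then follows by writing Borcherds' product as a generalized theta block (the leading term in $s$) times the exponential of a correction whose $m$-th Fourier--Jacobi coefficient is recognised, via the index-raising Hecke operators defined above, as $d^{-1}\, \mathbf{G}(\phi_d)$ evaluated at $dZ$.

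For the Fricke duality (5), the involution $V_t$ corresponds to a lattice isometry that fixes the Humbert divisor $H(0,-1,0,1,0)$; hence $\Psi \circ V_t = \pm \Psi$, and the sign is pinned down by the parity of the multiplicity $D_0$ of $\Psi$ along this divisor, whose explicit formula follows from (4) combined with the fact that the divisor is stabilised by the $(\mathbb{Z}/N\mathbb{Z})^\times$-action (producing the factor $\varphi(N/d)$). I expect the main technical obstacle to be the combinatorial bookkeeping in (1): tracking precisely how Fourier coefficients of each $\phi_d$ at various cusps contribute to a given component $\mathfrak{e}_\gamma$ of $F$, and then executing the resulting double Möbius inversion -- once over $b$ inside each $d$, and once over $d \mid N$ -- to arrive at the stated multiplicity formulas. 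This is essentially the calculation carried out in greater generality for lattice-index Jacobi forms in \cite{WW22}, and the theorem can be viewed as the translation of that result into the scalar-index Jacobi setting.
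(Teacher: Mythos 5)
Your proposal is correct and follows essentially the same route as the paper: the paper's entire proof is the one-line citation ``this is a special case of \cite[Theorem 5.9]{WW22}'', and your outline is a faithful sketch of how that general lattice-index result is established (the $\hat{\mathbb{J}}$ isomorphism feeding Borcherds' singular theta lift on $U\oplus U(N)\oplus A_1(t)$, the identification of $K_0(t,N)^+$ with the relevant orthogonal group, and the M\"obius-inversion bookkeeping for the exponents), which you yourself note is the specialization of \cite{WW22} to scalar index.
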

\begin{proof}
This is a special case of \cite[Theorem 5.9]{WW22}.     
\end{proof}

If every $\phi_d$ has rational Fourier expansion at infinity then $\hat{\mathbb{J}}((\phi_d)_{d|N})$ also has rational Fourier expansion by \cite[Lemma 5.6]{WW22}. Therefore, there exists an integer $D$ such that $\hat{\mathbb{J}}((D\cdot\phi_d)_{d|N})$ has integral principal part. 

We also have the following expression for $\Psi$ which is formally similar to the infinite product proposed by Cheng--Duncan (Equation \eqref{eq:infprod} of the introduction). 

\begin{proposition}[(5.7) in \S 5.2 of \cite{WW22}]\label{prop:exp-product} Suppose the conditions of Theorem \ref{th:FJ-level-N} hold. For any positive integer $a$ we define
\begin{equation*}
\phi_a = \phi_{d}, \quad d=\mathrm{gcd}(a,N).    
\end{equation*}
Then the Borcherds product $\Psi$ is represented on an open subset of $\mathbb{H}_2$ by the infinite product
\begin{equation}
\Psi(Z) = q^A\zeta^B s^C \prod_{(n,r,m)>0} \exp\left( -\sum_{a=1}^\infty c_{a}(nm,r) \frac{(q^n\zeta^r s^{tm})^a}{a}  \right).            
\end{equation}
\end{proposition}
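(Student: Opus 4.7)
The plan is to take the logarithm of the product formula in Theorem~\ref{th:FJ-level-N}(1) on the open subset of $\HH_2$ where the product converges absolutely, expand each factor via $\log(1-x) = -\sum_{a\geq 1} x^a/a$, and reindex the resulting double sum. Setting $(X,Y,Z) = (an,ar,am)$, the auxiliary variable $a$ ranges over positive divisors of $\gcd(X,Y,Z)$; matching coefficients of $q^X\zeta^Y s^{tZ}$ on both sides then reduces the proposition to the combinatorial identity
\begin{equation*}
\sum_{a\mid \gcd(X,Y,Z)}\frac{\mathrm{mult}(X/a,Y/a,Z/a)}{a} \;=\; \sum_{a\mid \gcd(X,Y,Z)}\frac{c_{\gcd(a,N)}\!\big(XZ/a^2,\, Y/a\big)}{a}
\end{equation*}
for each $(X,Y,Z)>0$, where the convention $c_a = c_{\gcd(a,N)}$ on the right-hand side is precisely the one introduced in the statement.

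To prove this identity I would substitute the definition of $\mathrm{mult}$ into the left-hand side and change summation variables to $e=abd$ and $f=bd$. The combined constraints $a\mid\gcd(X,Y,Z)$ and $bd\mid\gcd(X/a,Y/a,Z/a,N)$ are equivalent to $e\mid\gcd(X,Y,Z)$, $f\mid e$, and $f\mid N$, and the weight $\tfrac{1}{a\cdot bd}$ regroups as $\tfrac{1}{e}$. The left-hand side rewrites as
\begin{equation*}
\sum_{e\mid\gcd(X,Y,Z)}\frac{1}{e}\sum_{\substack{f\mid e\\ f\mid N}}\sum_{b\mid f}\mu(b)\, c_{f/b}\!\big(XZ/e^2,\; Y/e\big).
\end{equation*}
For fixed $e$, setting $g=\gcd(e,N)$, the inner double sum becomes $\sum_{d\mid g}c_d(\cdots)\sum_{b\mid g/d}\mu(b)$, which standard Möbius inversion collapses to $c_g(\cdots) = c_{\gcd(e,N)}(\cdots)$. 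Renaming $e$ as $a$ produces the right-hand side.

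The main obstacle is the analytic bookkeeping at the outset, since a convergent infinite product must be logarithmically differentiated and the resulting double series must be reindexed. I would handle this by restricting to the tube domain on which Borcherds' product in Theorem~\ref{th:FJ-level-N}(1) converges absolutely; there Fubini justifies every rearrangement, and the identity of both sides as holomorphic functions on this open set transfers to the Fourier expansions the proposition refers to. The remaining combinatorics is a routine Möbius inversion; the only trap is keeping the level $N$ appearing in the definition of $\mathrm{mult}$ notationally distinct from the summation variables, which is the motivation for writing $(X,Y,Z)$ rather than $(N,R,M)$ above.
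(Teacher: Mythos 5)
Your argument is correct, and it checks out in detail: with $(X,Y,Z)=(an,ar,am)$ the coefficient identity you isolate is exactly what must be verified, and the substitution $e=abd$, $f=bd$ together with $\sum_{b\mid g/d}\mu(b)=\delta_{d,g}$ (for $g=\gcd(e,N)$) collapses the left-hand side to $\sum_{e\mid\gcd(X,Y,Z)}e^{-1}c_{\gcd(e,N)}(XZ/e^2,Y/e)$, which is the right-hand side under the convention $c_a=c_{\gcd(a,N)}$. The paper itself gives no proof of this proposition --- it is imported verbatim as identity (5.7) of \cite{WW22} --- so your derivation is a genuinely self-contained alternative: it deduces the exponential form directly from the product form in Theorem \ref{th:FJ-level-N}(1) by logarithmic expansion and M\"obius inversion, using nothing beyond what is already stated in this paper. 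That is a worthwhile trade: the reader no longer needs to consult \cite{WW22} for this step, at the cost of a page of bookkeeping. Your handling of convergence (restricting to the tube domain where the product converges absolutely and invoking Fubini) is the right way to justify the rearrangement. The only point worth a sentence of care is the meaning of $(n,r,m)>0$: the introduction and Theorem \ref{th:FJ-level-N} order the cone differently ($m$ first versus $n$ first), and your reindexing implicitly uses that the chosen positivity condition is stable under division by positive integers, which holds for either convention but should be fixed once at the outset.
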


\begin{remark} Cl\'ery--Gritsenko (\cite{CG11}, especially Theorem 3.1) have considered Borcherds products on the groups $K_0(t, N)$ associated to Jacobi forms. In our notation, their result concerns the special case that all $\phi_d$, $d > 1$ are traces of $\phi_1$, i.e. $$\phi_d = \sum_{M \in \Gamma_0(N) \backslash \Gamma_0(N/d)} \phi_1 \Big|_{k, t} M.$$
\end{remark}

\subsection{The Mathieu group and eta products}

The largest Mathieu group $M_{24}$ is a sporadic simple group of order $$244823040 = 2^{10} \cdot 3^3 \cdot 5 \cdot 7 \cdot 11 \cdot 23$$ with a natural linear action on the Leech lattice, the unique unimodular lattice of rank 24 without roots. Through this action, every  $g \in M_{24}$ has a characteristic polynomial of the form $$\mathrm{det}(X - g) = \prod_{k=1}^{\infty} (1 - X^k)^{b_k}, \quad b_k \in \mathbb{N}_0.$$ The symbol $\prod_{b_k \neq 0} k^{b_k}$ is called the \emph{cycle shape} of $g$ and it depends only on the conjugacy class of $g$ in $M_{24}$.

There are $26$ conjugacy classes in $M_{24}$ with $21$ distinct cycle shapes (see e.g. \cite[Table 1]{CD12}). For a conjugacy class $[g]$ with cycle shape $\prod k^{b_k}$, the largest $k \in \mathbb{N}$ for which $b_k \neq 0$ is the order of $g$, denoted $n_g$. The product of $n_g$ and the smallest $k$ for which $b_k \neq 0$ is the \emph{level} of $g$, denoted $N_g$; it is the level of the associated eta product $$\eta_g(\tau) = \prod_k \eta(k \tau)^{b_k} = q^{\frac{1}{24} \sum_k k b_k} \prod_{n=1}^{\infty} \prod_{k=1}^{\infty} (1 - q^{nk})^{b_k}.$$ More precisely, $\eta_g$ is a cusp form of weight $\frac{1}{2} \sum_k b_k$ for the group $\Gamma_0(N_g)$ and a Dirichlet character, which is trivial if the weight is even. 

We write $\chi(g) := \mathrm{tr}(g)$ for the trace of $g$ acting on the Leech lattice; if $g$ has cycle shape $\prod_k k^{b_k}$ then $\chi(g) = b_1$. For any $d \in \mathbb{N}$, the cycle shape of $g^d$ is $$\prod_{k=1}^\infty \Big( k/\mathrm{gcd}(k,d) \Big)^{\mathrm{gcd}(k,d)\cdot b_k},$$ and in particular $\chi(g^d) = \sum_{k | d} k b_k$. By M\"obius inversion, the exponents in the cycle shape are
\begin{equation} \label{eq:b_m}
b_d = \frac{1}{d} \sum_{k | d} \mu(d/k) \chi(g^k). 
\end{equation}

Each conjugacy class $[g]$ in $M_{24}$ determines a twisted elliptic genus of $K3$ surfaces. This is a weak Jacobi form of weight $0$ and index $1$ and level $\Gamma_0(N_g)$ with trivial character, given by the expression 
\begin{align*}
\phi_g(\tau, z) &= \frac{\chi(g)}{12} \phi_{0, 1}(\tau, z) + \tilde{T}_g(\tau) \phi_{-2, 1}(\tau, z) \\ &=\sum_{n=0}^{\infty} \sum_{r \in \mathbb{Z}} c_g(4n - r^2) q^n \zeta^r \\ &= 2 \zeta^{-1} + (\chi(g) - 4) + 2 \zeta + O(q). 
\end{align*} 
Here, $\phi_{-2, 1}$ and $\phi_{0, 1}$ are the basic weak Jacobi forms
\begin{align*}
\phi_{-2, 1}(\tau, z) &= \frac{\vartheta(\tau, z)^2}{\eta(\tau)^6} = \zeta^{-1} - 2 + \zeta + O(q) \in J_{-2, 1}^{\w}(\mathrm{SL}_2(\mathbb{Z})),  \\
\phi_{0, 1}(\tau, z) &= -\frac{3}{\pi^2} \wp(\tau, z) \cdot \phi_{-2, 1}(\tau, z) = \zeta^{-1} + 10 + \zeta + O(q) \in J_{0, 1}^{\w}(\mathrm{SL}_2(\mathbb{Z})),
\end{align*}
where $\wp$ is the Weierstrass $\wp$-function. The function $\tilde T_g$ is a particular modular form of weight $2$ which can be found in \cite[Table 2]{EH12} or \cite[Table 1]{GS21}.

\begin{lemma}\label{lem:Zg} Fix $g \in M_{24}$.
Let $a$ be a positive integer and $d = \mathrm{gcd}(a, n_g)$. Then $\phi_{g^a} = \phi_{g^d}$. Moreover, for any $d|N_g$, $\phi_{g^d}$ is a weak Jacobi form on $\Gamma_0(N_g/d)$ with trivial character.
\end{lemma}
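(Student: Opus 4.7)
The plan is to treat the two claims of Lemma \ref{lem:Zg} separately.

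For the identity $\phi_{g^a}=\phi_{g^d}$, the key starting point is that since $d=\gcd(a,n_g)$, the powers $g^a$ and $g^d$ generate the same cyclic subgroup of $M_{24}$, so $g^a=(g^d)^k$ for some $k$ coprime to $\mathrm{ord}(g^d)=n_g/d$. It therefore suffices to show $\phi_h=\phi_{h^k}$ whenever $h\in M_{24}$ and $\gcd(k,\mathrm{ord}(h))=1$. Using the decomposition
$$\phi_h(\tau,z)=\frac{\chi(h)}{12}\phi_{0,1}(\tau,z)+\tilde T_h(\tau)\phi_{-2,1}(\tau,z),$$
this reduces to two identities: (i) $\chi(h)=\chi(h^k)$, which holds because $h$ and $h^k$ have the same cycle shape on $24$ points (hence the same number $b_1$ of fixed points); and (ii) $\tilde T_h=\tilde T_{h^k}$. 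For (ii), Gannon's theorem \cite{Gan16} shows that every Fourier coefficient of the mock modular form underlying $\tilde T$ is (on the class of $h$) a genuine integer-valued character of $M_{24}$, and any integer-valued class function is invariant under $h\mapsto h^k$ for $\gcd(k,\mathrm{ord}(h))=1$. Alternatively, (ii) can be read off directly from the explicit expressions for $\tilde T_g$ in \cite[Table 2]{EH12}.

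For the modularity assertion, by construction $\phi_{g^d}$ is a weak Jacobi form on $\Gamma_0(N_{g^d})$ with trivial character, so it is enough to prove the divisibility $N_{g^d}\mid N_g/d$ whenever $d\mid N_g$. Writing $N_h=n_h\cdot s_h$, where $s_h$ denotes the smallest $k$ with $b_k\neq 0$ in the cycle shape of $h$, and using the cycle-shape identity
$$\text{cycle shape of $g^d$}=\prod_{k}\bigl(k/\gcd(k,d)\bigr)^{\gcd(k,d)\,b_k},$$
one obtains immediately $n_{g^d}=n_g/\gcd(n_g,d)$. A short prime-by-prime computation using $d\mid n_g s_g$ then yields $s_{g^d}\cdot d\mid s_g\cdot\gcd(n_g,d)$, whence $N_{g^d}=n_{g^d}\,s_{g^d}\mid n_g s_g/d=N_g/d$. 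Since $M_{24}$ has only $26$ conjugacy classes, this divisibility can alternatively be verified directly from Table 1 of \cite{CD12}.

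The main obstacle, and the only non-elementary step, is (ii) in the first part, which depends on Gannon's rationality theorem; everything else is routine arithmetic.
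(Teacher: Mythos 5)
Your proposal is correct in substance and reaches the same two reductions as the paper, but by a longer route. The paper's proof of the first claim is a one-liner: $\phi_h$ depends only on the cycle shape of $h$ (a fact built into the Mathieu moonshine data, since algebraically conjugate classes such as $7A/7B$ share their McKay--Thompson series), and $g^a$ and $g^d$ have the same cycle shape because $\gcd(k,a)=\gcd(k,d)$ for every cycle length $k\mid n_g$. Your detour through $g^a=(g^d)^{a'}$ with $\gcd(a',n_g/d)=1$ and the decomposition into $\chi(h)$ and $\tilde T_h$ essentially re-proves this cycle-shape invariance, at the cost of invoking Gannon's theorem; that is heavier machinery than needed, though it does buy a conceptual explanation of \emph{why} $\phi$ is constant on power-conjugate classes rather than an appeal to the tables. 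One justification you give is wrong as stated: it is false that ``any integer-valued class function is invariant under $h\mapsto h^k$ for $\gcd(k,\mathrm{ord}(h))=1$'' (the indicator function of a single irrational class is an integer-valued counterexample). The correct principle is that a \emph{rational-valued virtual character} is fixed by the Galois action $\chi(h)\mapsto\chi(h^k)$; since Gannon supplies genuine characters with integer values, your conclusion survives, but the sentence should be repaired. For the second claim, both you and the paper reduce to $N_{g^d}\mid N_g/d$ and both ultimately verify it by inspection of the $26$ classes; your sketched general argument is incomplete, since $s_{g^d}=\min_k k/\gcd(k,d)$ does not obviously satisfy the divisibility $s_{g^d}\,d\mid s_g\gcd(n_g,d)$ prime by prime without using special (e.g.\ balanced) features of the $M_{24}$ cycle shapes -- but your stated fallback to Table 1 of \cite{CD12} is exactly what the paper does.
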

\begin{proof}
The form $\phi_h$ depends only the cycle shape of $h$. Since $g^a$ and $g^d$ have the same cycle shape, the equality $\phi_{g^a} = \phi_{g^d}$ is proved. To prove the last claim it suffices to show that the level of $g^d$ divides $N_g/d$. This is verified by direct calculation. 
\end{proof}

\section{Proofs of the main theorems}
We first employ Theorem \ref{th:FJ-level-N} to prove Conjecture \ref{conj}, that is, $\Phi_g$ is a meromorphic Siegel modular form on $\Gamma_0^{(2)}(N_g)$ which can be constructed as a Borcherds product on $U(N_g)\oplus U\oplus A_1$. 

\begin{proof}[Proof of Theorem \ref{mth1}]
It is clear from the definition that each $\phi_g$ has rational Fourier coefficients at infinity. By \cite[Lemma 5.6]{WW22}, the image of $(\phi_{g^d})_{d|N_g}$ under the isomorphism $\hat{\mathbb{J}}$ (see Theorem \ref{th:isomorphism}), which we denote $F_g$, also has rational Fourier expansion. Therefore, there exists a denominator $d_g \in \mathbb{N}$ such that $d_g \cdot F_g$ has integral principal part. It follows that $\Borch(d_g F_g)$ is a well-defined meromorphic Borcherds product. 
Since 
$$
\phi_{g^{N_g}}(\tau,z) = 2\phi_{0,1}(\tau,z) = 2(\zeta+\zeta^{-1}+10)+O(q),
$$
the Weyl vector of $\Borch(d_g F_g)$ is $(A,B,C) = (d_g, d_g, d_g)$. Using Proposition \ref{prop:exp-product} and Lemma \ref{lem:Zg} we see that $$\Borch(d_g F_g) = \Phi_g^{d_g}.$$ By Theorem \ref{th:FJ-level-N}, the weight of $\Phi_g$ is 
\begin{align*}
k_g&=\frac{1}{2}\sum_{d|N_g} \mathrm{mult}_d(0,0)  \\ &=\frac{1}{2} \sum_{d|N_g} \frac{1}{d} \sum_{k|d} \mu(d/k) c_{g^k}(0) \\
& = \frac{1}{2} \sum_{d|N_g} \frac{1}{d} \sum_{k|d} \mu(d/k) \mathrm{tr}(g^k) - 2\sum_{d|N_g} \frac{1}{d} \sum_{k|d} \mu(d/k) \\
&= -2 + \frac{1}{2}\sum_{k} b_k,
\end{align*}
where we have used Equation \eqref{eq:b_m} in the last equality.

It remains to show that $\Borch(F_g)$ itself is single-valued, i.e. the principal part of $F_g$ is integral. By Theorem \ref{th:isomorphism}, the principal part of $F_g$ is completely determined by the Fourier coefficients of index $(n,r)$ with $n<\frac{1}{4}$ of $\phi_{g^d}$ at all cusps for all $d|n_g$. The forms $\tilde{T}_g$ are expressions in eta quotients and modular forms of the type $$
E_2^{(N)}(\tau):=\frac{1}{N-1}\Big(NE_2(N\tau) - E_2(\tau)\Big)=1+O(q),
$$ where $E_2(\tau) = 1 - 24 \sum_{n=1}^{\infty} \sigma_1(n) q^n$. The Fourier expansion of $E_2^{(N)}$ at any cusp is easy to determine using the fact that $E_2$ is quasimodular for the full group $\mathrm{SL}_2(\mathbb{Z})$, and the Fourier expansion of an eta quotient at any cusp was given explicitly by Scheithauer \cite[Propositions 6.1 and 6.2]{Sch09}. Using these formulas, we were able to find the principal parts by a computer calculation; they are listed in Appendix A and are indeed integral. We have thus proved that $\Phi_g$ is a meromorphic Siegel modular form of weight $k_g$ for $\Gamma_0^{(2)}(N_g)$. 
\end{proof}

Theorem \ref{mth1} yields short proofs of three previously-known results which have applications to mathematical physics (see e.g. \cite{Che10, CD12, PV14}):

\begin{corollary}
The form $\Phi_g$ has the Fourier--Jacobi expansion
\begin{equation}
\Phi_g(Z) = \varphi_g(\tau,z)\cdot s \cdot \exp\Big( -\sum_{d|N_g} \Grit\big(d^{-1}\phi_{g^d}\big)(dZ) \Big),    
\end{equation}
where the leading Fourier--Jacobi coefficient $\varphi_g$ is
\begin{equation}
    \varphi_g(\tau,z) = \eta_g(\tau)\phi_{-2,1}(\tau,z).
\end{equation}
\end{corollary}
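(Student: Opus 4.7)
The plan is to apply Theorem \ref{th:FJ-level-N}(2) to the sequence $(\phi_{g^d})_{d|N_g}$ which Theorem \ref{mth1} has already identified as the Jacobi preimage of $\Phi_g$ under Borcherds' lift. This immediately yields
$$
\Phi_g(Z) = \Theta(\tau,z)\, s^C \cdot \exp\Big(-\sum_{d|N_g} d^{-1}\Grit(\phi_{g^d})(dZ)\Big),
$$
and since the Hecke operators $T_{-}^{(D)}(m)$, and hence the formal lift $\Grit$, are linear in their argument, we have $d^{-1}\Grit(\phi_{g^d}) = \Grit(d^{-1}\phi_{g^d})$. So the exponential factor already matches the claim, and it remains to verify that $C=1$ and $\Theta(\tau,z) = \varphi_g(\tau,z)$.

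For the Weyl vector component $C$, the key observation is that $g^{N_g}$ lies in the identity class $1A$, so by Lemma \ref{lem:Zg} we have $\phi_{g^{N_g}} = 2\phi_{0,1} = 2\zeta^{-1}+20+2\zeta+O(q)$; consequently $c_{N_g}(0,\pm 1)=2$ and the formula $C = \tfrac14\sum_r c_{N_g}(0,r)r^2$ from Theorem \ref{th:FJ-level-N}(1) evaluates to $1$.

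The main step is identifying the theta block $\Theta$. From the universal leading expansion $\phi_{g^d} = 2\zeta^{-1}+(\chi(g^d)-4)+2\zeta+O(q)$, M\"obius inversion yields
\begin{align*}
\mathrm{mult}_d(0,\pm 1) &= \frac{2}{d}\sum_{t|d}\mu(d/t) = \begin{cases} 2, & d=1,\\ 0, & d>1, \end{cases} \\
\mathrm{mult}_d(0,0) &= \frac{1}{d}\sum_{t|d}\mu(d/t)\bigl(\chi(g^t)-4\bigr) = \begin{cases} b_1-4, & d=1,\\ b_d, & d>1, \end{cases}
\end{align*}
where the second identity uses \eqref{eq:b_m}. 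All higher coefficients vanish: $\mathrm{mult}_d(0,r) = 0$ for $|r|\geq 2$ since $\phi_{-2,1}$ and $\phi_{0,1}$ are supported on $|r|\leq 1$ at $n=0$. Substituting into the theta block formula in Theorem \ref{th:FJ-level-N}(2) collapses the product to
\begin{align*}
\Theta(\tau,z) &= \eta(\tau)^{b_1-4}\Big(\frac{\vartheta(\tau,z)}{\eta(\tau)}\Big)^2 \prod_{\substack{d|N_g \\ d>1}}\eta(d\tau)^{b_d} \\
&= \frac{\vartheta(\tau,z)^2}{\eta(\tau)^6}\prod_{d\geq 1}\eta(d\tau)^{b_d} = \phi_{-2,1}(\tau,z)\,\eta_g(\tau),
\end{align*}
which is precisely $\varphi_g$. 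The only bookkeeping point is that $b_d\neq 0$ forces $d|n_g|N_g$, so indexing by $d|N_g$ captures every factor of $\eta_g$. With Theorems \ref{th:FJ-level-N} and \ref{mth1} already in hand, I do not expect any serious obstacle here; the corollary reduces to this single cusp-at-infinity computation.
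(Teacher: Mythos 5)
Your proposal is correct and follows essentially the same route as the paper: invoke Theorem \ref{th:FJ-level-N}(2), then compute $\mathrm{mult}_d(0,0)$ and $\mathrm{mult}_d(0,1)$ via M\"obius inversion and equation \eqref{eq:b_m} to collapse the theta block to $\eta_g\phi_{-2,1}$. The only additions beyond the paper's argument are the explicit checks that $C=1$ and that $\Grit$ is linear, both of which are routine and correctly handled.
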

\begin{proof}
Except for the explicit description of $\varphi_g$, this follows immediately from Theorem \ref{th:FJ-level-N}. The Jacobi form $\varphi_g$ is a theta block involving the coefficients $\mathrm{mult}_d(0,0)$ and $\mathrm{mult}_d(0,1)$ for $d|N_g$. We find
\begin{align*}
\mathrm{mult}_d(0,1) &= \frac{2}{d}\sum_{t|d} \mu(d/t) = \begin{cases} 2: & d = 1; \\ 0: & d > 1; \end{cases}
\end{align*}
and similarly
\begin{align*}
\mathrm{mult}_d(0,0) &= \frac{1}{d}\sum_{t|d} \mu(d/t) \mathrm{tr}(g^t) -  \frac{4}{d}\sum_{t|d} \mu(d/t)  = \begin{cases} b_1 - 4: & d = 1; \\ b_d: & d > 1. \end{cases}
\end{align*}
Therefore, 
\[
\varphi_g(\tau,z) = \frac{\prod_{d|N_g} \eta(d\tau)^{b_d}}{\eta(\tau)^4} \cdot \frac{\vartheta(\tau,z)^2}{\eta(\tau)^2} = \eta_g(\tau)\phi_{-2,1}(\tau,z). \qedhere
\]
\end{proof}

\begin{corollary}
The following duality holds:
\begin{equation}
\Phi_g(\tau,z,\omega) = \Phi_{g}(\omega, z,\tau).     
\end{equation}
\end{corollary}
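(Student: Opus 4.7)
The duality will follow immediately from Theorem \ref{th:FJ-level-N}(5), so the plan is to identify the correct instance of that result and then verify the sign is trivial.

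First I would note that in Theorem \ref{mth1} the lattice is $U(N_g)\oplus U \oplus A_1$, corresponding to $t=1$ in the setup $M = U(N) \oplus A_1(t)$. With $t=1$, the Fricke involution $V_1$ acts on $\HH_2$ by swapping the diagonal entries, i.e. $V_1 \cdot \begin{psmallmatrix}\tau & z \\ z & \omega\end{psmallmatrix} = \begin{psmallmatrix}\omega & z \\ z & \tau\end{psmallmatrix}$, which is precisely the transformation appearing in the statement.

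Next I would apply Theorem \ref{th:FJ-level-N}(5) to the Borcherds product representation $\Phi_g = \Borch((\phi_{g^d})_{d\mid N_g})$ established in the proof of Theorem \ref{mth1}. This gives
\begin{equation*}
\Phi_g\Big(\begin{psmallmatrix}\omega & z \\ z & \tau\end{psmallmatrix}\Big) = (-1)^{D_0}\,\Phi_g\Big(\begin{psmallmatrix}\tau & z \\ z & \omega\end{psmallmatrix}\Big), \qquad D_0 = \frac{1}{N_g}\sum_{d\mid N_g}\sum_{\lambda=1}^\infty \varphi(N_g/d)\, c_{g^d}(-\lambda^2,0),
\end{equation*}
where $c_{g^d}(n,r)$ are the Fourier coefficients of $\phi_{g^d}$ at the cusp $\infty$. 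So the entire problem reduces to showing $D_0=0$.

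The key observation is that each $\phi_{g^d}$ is a \emph{weak} Jacobi form (not merely weakly holomorphic): its Fourier expansion at infinity takes the form $\sum_{n\geq 0}\sum_r c_{g^d}(n,r)q^n\zeta^r$ by the very definition of the twisted elliptic genus. Consequently $c_{g^d}(-\lambda^2,0)=0$ for every $\lambda\geq 1$ and every $d\mid N_g$, so every summand in the expression for $D_0$ vanishes. Hence $(-1)^{D_0}=1$ and the duality $\Phi_g(\tau,z,\omega) = \Phi_g(\omega,z,\tau)$ follows. No nontrivial obstacle arises; the entire content of the corollary is packaged in the appropriate specialization of Theorem \ref{th:FJ-level-N}(5) combined with the elementary fact that weak Jacobi forms have no negative $q$-powers.
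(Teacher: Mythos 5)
Your proposal is correct and matches the paper's argument: the paper also deduces the corollary from Theorem \ref{th:FJ-level-N}(5) together with the observation that $D_0=0$, which (as you spell out) holds because each $\phi_{g^d}$ is a weak Jacobi form and hence has no Fourier coefficients $c_{g^d}(-\lambda^2,0)$ with $\lambda\geq 1$.
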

\begin{proof}
This follows from Theorem \ref{th:FJ-level-N} (5) and the number $D_0=0$. 
\end{proof}

\begin{corollary}
The following identity holds:
\begin{equation}
\lim_{z\to 0}\frac{\Phi_g(\tau, z, w)}{(2\pi iz)^2} = \eta_g(\tau)\eta_g(\omega).    
\end{equation}
\end{corollary}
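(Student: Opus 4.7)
The plan is to combine the Fourier--Jacobi expansion from the first of the two preceding corollaries with the duality from the second. Since $\vartheta(\tau,z) = (2\pi i z)\eta(\tau)^3 + O(z^3)$ by the Jacobi triple product, we have $\phi_{-2,1}(\tau,z)/(2\pi i z)^2 \to 1$ as $z\to 0$, so $\varphi_g(\tau,z)/(2\pi i z)^2 \to \eta_g(\tau)$. It therefore remains to evaluate the exponential factor $E(Z) := \exp\!\big(-\sum_{d\mid N_g}\Grit(d^{-1}\phi_{g^d})(dZ)\big)$ at $z=0$.

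The key observation is that $\phi_{g^d}(\tau,0)=\chi(g^d)$ is constant in $\tau$, so for $m\geq 1$ the index-raising Hecke operators satisfy
$$\big[\phi_{g^d}\,\big|\,T_-^{(N_g/d)}(m)\big](\tau,0)=\chi(g^d)\!\!\sum_{\substack{a\mid m\\(a,N_g/d)=1}}\!\!\frac{1}{a},$$
which is independent of $\tau$. The index-zero contributions satisfy $\sum_{d\mid N_g}d^{-1}[\phi_{g^d}\,|\,T_-^{(N_g/d)}(0)](d\tau,dz)\equiv 0$: this identity is forced by the requirement that the $s^0$-coefficient of the exponential equal $1$, equivalently that the leading Fourier--Jacobi coefficient of $\Phi_g$ coincide with $\varphi_g$ as stated in the first corollary. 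Restricting to $z=0$ and combining the two ingredients, we conclude that $E(\tau,0,\omega)=:K(\omega)$ is a function of $\omega$ alone.

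Consequently $F(\tau,\omega):=\lim_{z\to 0}\Phi_g(Z)/(2\pi i z)^2 = \eta_g(\tau)\,s\,K(\omega)$, while the duality $\Phi_g(\tau,z,\omega)=\Phi_g(\omega,z,\tau)$ yields $F(\tau,\omega)=F(\omega,\tau)=\eta_g(\omega)\,q\,K(\tau)$. Hence $\eta_g(\tau)/(q\,K(\tau))=\eta_g(\omega)/(s\,K(\omega))$, and since each side depends on a single variable, both equal a common constant $c$, giving $F = c^{-1}\eta_g(\tau)\eta_g(\omega)$. Finally, since $K(\omega)=1+O(s)$ and $\eta_g(\omega)=s+O(s^2)$ as $\omega\to i\infty$, the ratio $sK(\omega)/\eta_g(\omega)\to 1$, forcing $c=1$. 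The one step requiring genuine care is the vanishing of the $T_-(0)$-contribution; in this outline it is imported from the first corollary above, but a self-contained verification would involve unpacking the Bernoulli-type constants $c_\phi$ in the definition of $T_-^{(D)}(0)$ and checking the resulting cancellation directly.
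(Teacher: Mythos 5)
Your argument is correct, but it takes a genuinely different route from the paper. The paper disposes of the corollary in one structural step: since $\Phi_g$ has a double zero along $z=0$, the limit is the quasi-pullback of the Borcherds product along the embedding $U(N_g)\oplus U\hookrightarrow U(N_g)\oplus U\oplus A_1$, which is again a Borcherds product --- now on $U(N_g)\oplus U$ --- whose input is the family of restrictions $\phi_{g^d}(\tau,0)=\chi(g^d)$, i.e.\ constants; the lift of this family of constants is identified with $\eta_g(\tau)\eta_g(\omega)$ by \cite[Remark 5.12]{WW22}. You instead squeeze the identity out of the two preceding corollaries: the Fourier--Jacobi expansion gives $\lim_{z\to 0}\Phi_g/(2\pi iz)^2=\eta_g(\tau)\,s\,K(\omega)$ because each $[\phi_{g^d}\,|\,T_-^{(N_g/d)}(m)](\cdot,0)$ with $m\ge 1$ is the constant $\chi(g^d)\sum_{a\mid m,\,(a,N_g/d)=1}a^{-1}$, and then the $\tau\leftrightarrow\omega$ duality, separation of variables, and the normalizations $K(\omega)=1+O(s)$ and $\eta_g(\omega)=s+O(s^2)$ (using $\sum_k kb_k=24$) pin down $sK(\omega)=\eta_g(\omega)$. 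This is a legitimate and more elementary derivation: it avoids the quasi-pullback formalism and the citation to \cite{WW22} entirely, at the cost of leaning on the exact normalization of the Fourier--Jacobi expansion. The one soft spot you correctly flag --- the vanishing of $\sum_{d}d^{-1}[\phi_{g^d}\,|\,T_-(0)](d\tau,dz)$ --- is indeed forced by the preceding corollary's assertion that the leading Fourier--Jacobi coefficient equals $\varphi_g$ exactly (the $s^0$-coefficient of the exponential must then be $1$), so importing it is fair; note that the paper's quasi-pullback argument never has to touch this term, which is one reason its proof is shorter. As a small bonus, your computation makes the content of \cite[Remark 5.12]{WW22} explicit here: the identity $sK(\omega)=\eta_g(\omega)$ is precisely the statement that the eta product $\eta_g$ is the exponential lift of the constants $\chi(g^d)$.
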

\begin{proof}
Note that $\Phi_g$ has a double zero at $z=0$. The limit can be viewed as the quasi-pullback of $\Phi_g$ along the embedding $$U(N_g)\oplus U \hookrightarrow U(N_g)\oplus U\oplus A_1.$$ This is itself the Borcherds lift of the sequence of Jacobi forms of index $\{0\}$ (i.e. modular forms),  $$\phi_{g^d}(\tau, 0), \quad d | N_g.$$ Here $\phi_g(\tau,0)=\chi(g)=\mathrm{tr}(g)$. The identity now follows from \cite[Remark 5.12]{WW22}. 
\end{proof}

Now we investigate when $\Phi_g$ is an additive lift. Eguchi and Hikami \cite{EH12} have represented the twisted elliptic genus $\phi_g$ in terms of the associated eta product $\eta_g$. More precisely, they find that
\begin{equation}
\phi_g(\tau,z) = - \frac{\varphi_g\Big|_{k_g,1}T_{-}^{(N_g)}(2)(\tau,z)}{\varphi_g(\tau,z)}  - f_g(\tau,z),
\end{equation}
where $\varphi_g = \eta_g \phi_{-2, 1}$ is the leading Fourier--Jacobi coefficient and where $f_g$ is identically zero if and only if $g$ is of type $1A$, $2A$, $2B$, $3A$, $4A$, $4B$, $5A$, $6A$, $7AB$, $8A$. 
In addition, they found
\begin{align}
f_{4C}(\tau,z)&=16\frac{\eta(2\tau)^4\eta(8\tau)^4}{\eta(4\tau)^4}\phi_{-2,1}(\tau,z),\\
f_{3B}(\tau,z)&=18\frac{\eta(\tau)^3\eta(9\tau)^3}{\eta(3\tau)^2}\phi_{-2,1}(\tau,z).
\end{align}
These $12$ cycle shapes correspond to the only conjugacy classes of $M_{24}$ for which $\varphi_g$ has positive weight. For each of these, by Theorem \ref{th:additive} we have 
\begin{equation}
\Grit(\varphi_g)(Z) = \sum_{m=1}^\infty \varphi_g\Big|_{k_g,1}T_{-}^{(N_g)}(m)(\tau,z)\cdot s^m, \quad s=e^{2\pi i\omega}.    
\end{equation}
Let $\hat{F}_g$ be the $\hat{\mathbb{J}}$-image of $(\phi_d)_{d|N_g}$ where $\phi_1=f_g$ and $\phi_d=0$ for $d>1$. By Theorem \ref{th:FJ-level-N},
\begin{equation}
\Borch(\hat{F}_g)(Z)= \exp\left( -\sum_{m=1}^\infty f_g\Big|_{0,1}T_{-}^{(N_g)}(m)(\tau,z)\cdot s^m \right) 
\end{equation}
has weight $0$. 
Note that $\Borch(\hat{F}_g)=1$ if $f_g=0$. 

Theorem \ref{mth2} is equivalent to the following result. 

\begin{theorem}\label{mth2-equiv}
Let $[g]$ be a conjugacy class of $M_{24}$ with positive-weight $\varphi_g$. Then we have
\begin{equation}
    \Phi_g(Z)\cdot \Borch(\hat{F}_g)(Z) = \Grit(\varphi_g)(Z).
\end{equation}
\end{theorem}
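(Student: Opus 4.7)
My plan is to realize both sides of the claimed identity as meromorphic Siegel modular forms of weight $k_g$ on $\Gamma_0^{(2)}(N_g)$ with the same character, and then to show their quotient is identically $1$ by combining an explicit divisor comparison with the Koecher principle. I will begin with weights and characters. The weight of $\Phi_g$ is $k_g$ by Theorem \ref{mth1} and the weight of $\Grit(\varphi_g)$ is $k_g$ by Theorem \ref{th:additive}. For $\Borch(\hat{F}_g)$, the explicit eta-quotient expressions for $f_{4C}$ and $f_{3B}$ show that every Fourier coefficient $c_{f_g}(0,r)$ vanishes, which forces $\mathrm{mult}_d(0,r)=0$ for all $d\mid N_g$ and $r\in\ZZ$; consequently the Weyl vector and the weight of $\Borch(\hat{F}_g)$ both vanish and its leading theta block is trivial. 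In the remaining ten cases $f_g=0$ and $\Borch(\hat{F}_g)=1$ identically.

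Next I will compare the leading Fourier--Jacobi coefficients. By the corollary computing the Fourier--Jacobi expansion of $\Phi_g$, its leading coefficient is $\varphi_g(\tau,z)\cdot s$, so combined with $\Borch(\hat{F}_g)=1+O(s)$ the left-hand side starts with $\varphi_g\cdot s$. By the definition of the additive lift, the right-hand side $\Grit(\varphi_g)$ also starts with $\varphi_g|_{k_g,1}T_-^{(N_g)}(1)\cdot s=\varphi_g\cdot s$. This matching simultaneously pins down the Dirichlet character on both sides as the one attached to $\eta_g\phi_{-2,1}$.

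The main step is a divisor comparison. By Theorem \ref{th:additive}, $\Grit(\varphi_g)$ has poles of order exactly $k_g$ along the Humbert surfaces $H(a,n,r,m,b)$ singled out by the principal part of $\hat{\mathbb{J}}((\varphi_g,0,\ldots,0))$. By Theorem \ref{th:FJ-level-N}(4), the divisors of $\Phi_g$ and $\Borch(\hat{F}_g)$ are determined by the principal parts of $F_g=\hat{\mathbb{J}}((\phi_{g^d})_{d\mid N_g})$ (listed in Appendix A) and $\hat{F}_g=\hat{\mathbb{J}}((f_g,0,\ldots,0))$ (listed in Appendix B) respectively. I will verify, for each of the twelve relevant conjugacy classes and along every Humbert surface, that the sum of the multiplicities contributed by $\Phi_g$ and $\Borch(\hat{F}_g)$ equals that of $\Grit(\varphi_g)$. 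The conceptual reason for this matching is the Eguchi--Hikami identity $\phi_g=-\varphi_g|_{k_g,1}T_-^{(N_g)}(2)/\varphi_g-f_g$, which is precisely the agreement of the $s^2$-Fourier--Jacobi coefficients of the two sides; the nontrivial work is the bookkeeping at the level of principal parts, since the divisors of $\Phi_g$ are in general intricate and the identity must be checked in every component of the discriminant group $M'/M$ for $M=U(N_g)\oplus A_1$.

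Once the divisors are shown to agree, the quotient $\Grit(\varphi_g)/(\Phi_g\cdot\Borch(\hat{F}_g))$ is a holomorphic, nowhere-vanishing Siegel modular form of weight $0$ on $\Gamma_0^{(2)}(N_g)$; by the Koecher principle it is constant, and matching the leading Fourier--Jacobi coefficient forces this constant to equal $1$. I expect the main obstacle to be the divisor bookkeeping in the third step, as the Humbert divisors of $\Phi_g$ can be complicated and this is the only place where the explicit structure of each individual class really enters the argument.
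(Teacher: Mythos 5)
Your overall strategy --- same weight, same leading Fourier--Jacobi coefficient $\varphi_g\cdot s$, a divisor comparison, and then Koecher's principle --- is exactly the skeleton of the paper's proof, and your preliminary observations (that $f_{3B}$ and $f_{4C}$ have vanishing coefficients $c(0,r)$, so $\Borch(\hat F_g)$ has weight $0$, trivial Weyl vector and leading coefficient $1$) are correct. The poles are also handled correctly: Theorem \ref{th:additive} locates the poles of $\Grit(\varphi_g)$ via the principal part of $\hat{\mathbb{J}}(\varphi_g)$, which reduces to finding the cusps where $\eta_g$ vanishes to order less than $1/4$, and these can be matched against the pole data of $\Phi_g\cdot\Borch(\hat F_g)$ in Appendices A and B.

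The genuine gap is in the other half of your divisor comparison. You propose to ``verify \dots that the sum of the multiplicities contributed by $\Phi_g$ and $\Borch(\hat F_g)$ equals that of $\Grit(\varphi_g)$'' along every Humbert surface, but you give no mechanism for computing the \emph{zero} multiplicities of the additive lift. Theorem \ref{th:additive} controls only the poles of $\Grit(\varphi_g)$; unlike a Borcherds product, an additive lift has no product expansion, and its zero divisor cannot be read off from the principal part of its input. Yet for most classes (already for $2B$, and e.g.\ for $3B$, where $\Phi_g\cdot\Borch(\hat F_g)$ has double zeros on Humbert surfaces of discriminants $1/4$ and $1/36$ besides the diagonal) one must prove that $\Grit(\varphi_g)$ vanishes to at least order $2$ on specific Humbert surfaces $r^\perp$ away from $z=0$. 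The paper supplies the missing tool: following \cite{W21}, it shows that $\Grit(\varphi_g)$ vanishes to order $N$ along $r^\perp$ by checking that the development coefficients $D_k^{r^\perp}\hat{\mathbb{J}}(\varphi_g)$ vanish for $0\le k\le N$, which is a finite computation via Sturm bounds. Without this (or an equivalent device) your step three cannot be carried out. Two smaller points: you only need the one-sided inequalities (every pole of $\Grit(\varphi_g)$ is a pole of the product, every zero of the product is a zero of $\Grit(\varphi_g)$) to make the quotient holomorphic --- exact divisor equality and the ``nowhere-vanishing'' claim are stronger than necessary and harder to establish; and the Eguchi--Hikami identity, while it motivates the statement by matching the $s^2$ Fourier--Jacobi coefficients, is not itself a proof of the divisor containments.
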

\begin{proof}
By Koecher's principle it is enough to show that $\frac{\Grit(\varphi_g)}{\Phi_g \cdot \Borch(\hat{F}_g)}$ is holomorphic, since $\Grit(\varphi_g)$ and $\Phi_g \cdot \Borch(\hat{F}_g)$ have the same weight and the same leading Fourier--Jacobi coefficient $\varphi_g$. The divisors of $\Phi_g$ and $\mathbf{B}(\hat F_g)$ can be read off of the data in the appendices. 

When $g$ is of type $1A$, $2A$, $3A$, $4B$, the product $\Phi_g$ has only a double zero on the diagonal $z=0$. On the other hand, the double zero of $\varphi_g(\tau, z)$ at $z = 0$ leads to a double zero of $\Grit(\varphi_g)$ at $z = 0$, and we are done. 

The other cases require more work. Firstly, the Gritsenko lift of $\varphi_g = \eta_g \phi_{-2, 1}$ may have poles, but these are easily described. If the theta-decomposition of $\phi_{-2, 1}$ is written $f_0 \mathfrak{e}_0 + f_{1/2} \mathfrak{e}_{1/2}$, where $$f_0(\tau) = 10 + 108 q + 808 q^2 + 4016q^3 + ...$$ $$f_{1/2}(\tau) = q^{-1/4} - 64q^{3/4} - 513 q^{7/4} - 2752 q^{11/4} - ...$$ then we have $$\hat{\mathbb{J}}(\varphi_g) = \sum_{\substack{c, d \in \mathbb{Z}/N_g\mathbb{Z} \\ \mathrm{gcd}(c, d, N_g) = 1}} \Big( \eta_g \Big| \begin{pmatrix} * & * \\ c & d \end{pmatrix} \Big) \big(f_0 \cdot \mathfrak{e}_{(c/N_g, 0, d/N_g)} + f_{1/2} \cdot \mathfrak{e}_{(c/N_g, 1/2, d/N_g)} \big),$$ so the principal part of $\hat{\mathbb{J}}(\varphi_g)$ can be determined from the cusps of $\Gamma_0(N_g)$ at which $\eta_g$ vanishes to order strictly less than $1/4$. 

Secondly, $\Phi_g\cdot \mathbf{B}(\hat F_g)$ generally has zeros on Humbert surfaces $r^{\perp}$ other than the diagonal $z=0$, and $\Grit(\varphi_g)$ must be shown to vanish on these zeros to at least the correct order. Following section 5 of \cite{W21}, to show that $\Grit(\varphi_g)$ vanishes on $r^{\perp}$ to order $N$ it is sufficient to show that the ``development coefficients" $D_k^{r^{\perp}} \hat{\mathbb{J}}(\varphi_g)$ of $\hat{\mathbb{J}}(\varphi_g)$ along $r^{\perp}$ vanish for $0 \le k \le N$. The latter are weakly holomorphic vector-valued modular forms for $\mathrm{SL}_2(\mathbb{Z})$ and they vanish identically as soon as all Fourier coefficients are zero up to an appropriate Sturm bound. 

As soon as we have shown that every pole of $\Grit(\varphi_g)$ appears as a pole of $\Phi_g \cdot \Borch(\hat{F}_g)$, and that every zero of $\Phi_g \cdot \Borch(\hat{F}_g)$ appears as zero of $\Grit(\varphi_g)$, we can apply Koecher's principle as in the first paragraph. All of the steps above can be carried out using the Sage package \cite{Wil22}.

As an example, we will describe the computations for the conjugacy class $3B$ of level $9$. The form $\eta_g(\tau) = \eta(3\tau)^8$ vanishes at the cusps $1/3, 2/3, \infty$ of $\Gamma_0(9)$ to order $1$, but it vanishes at the cusp $0$ to order only $1/9$; indeed, $$\tau^{-4} \eta_g(-1/\tau) = \frac{1}{81} \eta(\tau/3)^8 = \frac{1}{81} q^{1/9} - \frac{8}{81} q^{4/9} \pm ...$$ Therefore the associated vector-valued modular form has Fourier expansion beginning \begin{align*} \hat{\mathbb{J}}(\varphi_g) =& \frac{1}{81} q^{-5/36} \sum_{a \in (\mathbb{Z}/9\mathbb{Z})^{\times}} \mathfrak{e}_{(a/9, 1/2, -a^{-1}/9)} - \frac{2}{81} q^{1/9} \sum_{a \in (\mathbb{Z}/9\mathbb{Z})^{\times}} \mathfrak{e}_{(a/9, 0, -a^{-1}/9)} \\ &- \frac{8}{81} q^{7/36} \sum_{a \in (\mathbb{Z}/9\mathbb{Z})^{\times}} \mathfrak{e}_{(a/9, 1/2, 5a^{-1}/9)} + \frac{16}{81} q^{4/9} \sum_{a \in (\mathbb{Z}/9\mathbb{Z})^{\times}} \mathfrak{e}_{(a/9,0, 5a^{-1}/9)} \\ &+ O(q^{19/36}), \end{align*}  where $a^{-1}$ is the inverse of $a$ modulo $9$. This shows that $\Grit(\varphi_g)$ has double poles only on Humbert surfaces of discriminant $5/36$. Consulting Appendix A and Appendix B shows that $\Phi_g$ is holomorphic there and $\Borch(\hat{F}_g)$ contributes double poles on those divisors as well. We also see that $\Phi_g \cdot \Borch(\hat{F}_g)$ has only double zeros on the Humbert surfaces of discriminants $1/4$ and $1/36$. We verify that the first two development coefficients of $\hat{\mathbb{J}}(\varphi_g)$ along these divisors vanish to at least the respective Sturm bounds $\frac{1}{12} \cdot 2 = 1/6$ and $\frac{1}{12} \cdot 3 = 1/4$ using the Fourier expansion above, and we are done.
\end{proof}

For the other conjugacy classes, the form $\varphi_g$ has non-positive weight and there seems to be no reasonable way to define its additive lift.

\bigskip
\noindent
\textbf{Acknowledgements} 
H. Wang is supported by the Institute for Basic Science (IBS-R003-D1).  

\section*{Appendix A: Data for Theorem \ref{mth1}}

For each cycle shape of level $N_g$, the preimage of $\Phi_g$ under the Borcherds lift is expressed as a sequence of weak Jacobi forms $(\phi_d)_{d|N_g}$. We list the input forms $\phi_d$ as well as the principal part of the vector-valued modular form $\hat{\mathbb{J}}((\phi_d)_{d | N_g})$.

\bigskip

\noindent \textbf{Class 1A.} cycle shape $1^{24}$, order $1$, level $1$. The input Jacobi form is \begin{align*} \phi_1 &:= \phi_{1A}(\tau, z) \\ &= 2 \zeta^{\pm 1} + 20 + (20 \zeta^{\pm 2} - 128 \zeta^{\pm 1} + 216)q + (2 \zeta^{\pm 3} + 216 \zeta^{\pm 2} - 1026 \zeta^{\pm 1} + 1616)q^2 + O(q^3). \end{align*} 

Principal part: $$20 \mathfrak{e}_{(0,0,0)} + 2 q^{-1/4} \mathfrak{e}_{(0,1/2,0)}.$$

\bigskip

\noindent \textbf{Class 2A.} cycle shape $1^{8}2^8$, order $2$, level $2$. The input Jacobi forms are \begin{align*} \phi_1 &:= \phi_{2A}(\tau, z) \\ &= 2 \zeta^{\pm 1} + 4 + (4 \zeta^{\pm 2} - 8) q + (2 \zeta^{\pm 3} - 8 \zeta^{\pm 2} - 2 \zeta^{\pm 1} + 16) q^2 + O(q^3) \end{align*} and $\phi_2 = \phi_{1A}$.

Principal part: $$12 \mathfrak{e}_{(0,0,0)} + 2 q^{-1/4} \mathfrak{e}_{(0,1/2,0)}.$$

\bigskip

\noindent \textbf{Class 2B.} cycle shape $2^{12}$, order $2$, level $4$. The input Jacobi forms are \begin{align*} \phi_1 &:= \phi_{2B}(\tau, z) \\ &= 2 \zeta^{\pm 1} - 4+ (-4 \zeta^{\pm 2} + 8) q + (2 \zeta^{\pm 3} + 8 \zeta^{\pm 2} - 2 \zeta^{\pm 1} - 16) q^2 + O(q^3) \end{align*} and $\phi_2 = \phi_4 = \phi_{1A}$. 

Principal part: $$8 \mathfrak{e}_{(0,0,0)} + 2 q^{-1/4} \mathfrak{e}_{(0, 1/2, 0) } + 2 q^{-1/4} \mathfrak{e}_{(1/2,1/2,1/2)}.$$

\bigskip

\noindent \textbf{Class 3A.} cycle shape $1^{6}3^6$, order $3$, level $3$. The input Jacobi forms are \begin{align*} \phi_1 &:= \phi_{3A}(\tau, z) \\ &= 2 \zeta^{\pm 1} + 2 + (2 \zeta^{\pm 2} - 2 \zeta^{\pm 1})q + (2 \zeta^{\pm 3} - 4) q^2 + O(q^3) \end{align*} and $\phi_3 = \phi_{1A}$.

Principal part: $$8 \mathfrak{e}_{(0,0,0)} + 2 q^{-1/4} \mathfrak{e}_{(0,1/2,0)}.$$

\bigskip

\noindent \textbf{Class 3B.} cycle shape $3^8$, order 3, level 9. The input Jacobi forms are \begin{align*} \phi_1 &:= \phi_{3B}(\tau, z) \\ &= 2 \zeta^{\pm 1} - 4 + (-4 \zeta^{\pm 2} + 4 \zeta^{\pm 1}) q + (2 \zeta^{\pm 3} - 4) q^2 + O(q^3) \end{align*} and $\phi_3 = \phi_9 = \phi_{1A}$.

Principal part: \begin{align*} &4 \mathfrak{e}_{(0,0,0)} + 2 q^{-1/4} \mathfrak{e}_{(0,1/2,0)} + 2 q^{-1/4} \mathfrak{e}_{(1/3, 1/2, 2/3)} + 2 q^{-1/4} \mathfrak{e}_{(2/3,1/2,1/3)} \\ &\quad - 6 \sum_{a \in (\mathbb{Z}/9\mathbb{Z})^{\times}} q^{-1/36} \mathfrak{e}_{(a/9, 1/2, -2a^{-1}/9) }. \end{align*}

\bigskip

\noindent \textbf{Class 4A.} cycle shape $2^4 4^4$, order 4, level 8. The input Jacobi forms are \begin{align*} \phi_1 &:= \phi_{4A}(\tau, z) \\ &= 2 \zeta^{\pm 1} - 4 + (-4 \zeta^{\pm 2} + 16 \zeta^{\pm 1} - 24)q  + (2 \zeta^{\pm 3} - 24 \zeta^{\pm 2} + 62 \zeta^{\pm 1} - 80) q^2 + O(q^3); \end{align*} $\phi_2 = \phi_{2A}$ and $\phi_4 = \phi_8 = \phi_{1A}$.

Principal part: $$4 \mathfrak{e}_{(0,0,0)} + 2 q^{-1/4} \mathfrak{e}_{(0,1/2,0)} + 2 q^{-1/4} \mathfrak{e}_{(1/2,1/2,1/2)} - 2 \sum_{a\in (\mathbb{Z}/8\mathbb{Z})^{\times}} q^{-1/8} \mathfrak{e}_{(a/8, 1/2, -a^{-1}/8)}.$$

\bigskip

\noindent \textbf{Class 4B.} cycle shape $1^4 2^2 4^4$, order 4, level 4. The input Jacobi forms are $$ \phi_1 := \phi_{4B}(\tau, z) = 2 \zeta^{\pm 1} + (2 \zeta^{\pm 3} - 2 \zeta^{\pm 1}) q^2 + O(q^3);$$  $\phi_2 = \phi_{2A}$ and $\phi_4 = \phi_{1A}$. 

Principal part: $$6 \mathfrak{e}_{(0,0,0)} + 2 q^{-1/4} \mathfrak{e}_{(0,1/2,0)}.$$

\bigskip

\noindent \textbf{Class 4C.} cycle shape $4^6$, order 4, level 16. The input Jacobi forms are \begin{align*} \phi_1 &:= \phi_{4C}(\tau, z) \\ &= 2 \zeta^{\pm 1} - 4 + (-4 \zeta^{\pm 2} + 8 \zeta^{\pm 1} - 8) q + (2 \zeta^{\pm 3} - 8 \zeta^{\pm 2} + 14 \zeta^{\pm 1} - 16) q^2 + O(q^3); \end{align*} $\phi_2 = \phi_{2B}$; $\phi_4 = \phi_8 = \phi_{16} = \phi_{1A}$.

Principal part:  \begin{align*} &2 \mathfrak{e}_{(0,0,0)} + 2q^{-1/4}e_{(0, 1/2, 0)} + 2q^{-1/4}e_{(1/4, 1/2, 3/4)} + 2q^{-1/4}e_{(1/2, 1/2, 1/2)} + 2q^{-1/4}e_{(3/4, 1/2, 1/4)} \\ &\quad - 4 \sum_{a \in (\mathbb{Z}/16\mathbb{Z})^{\times}} q^{-1/16} \mathfrak{e}_{(a/16, 1/2, -3a^{-1}/16)}. \end{align*}

\bigskip

\noindent \textbf{Class 5A.} cycle shape $1^4 5^4$, order 5, level 5. The input Jacobi forms are \begin{align*} \phi_1 &:= \phi_{5A}(\tau, z) \\ &= 2 \zeta^{\pm 1} + (2 \zeta^{\pm 1} - 4) q + (2 \zeta^{\pm 3} - 4 \zeta^{\pm 2} + 4 \zeta^{\pm 1} - 4) q^2 + O(q^3) \end{align*} and $\phi_5 = \phi_{1A}$.

Principal part: $$4 \mathfrak{e}_{(0,0,0)} + 2q^{-1/4} \mathfrak{e}_{(0,1/2,0)} - 2 \sum_{a \in (\mathbb{Z}/5\mathbb{Z})^{\times}} q^{-1/20} \mathfrak{e}_{(a/5, 1/2, -a^{-1}/5)}.$$

\bigskip

\noindent \textbf{Class 6A.} cycle shape $1^2 2^2 3^2 6^2$, order 6, level 6. The input Jacobi forms are \begin{align*} \phi_1 &:= \phi_{6A}(\tau, z) \\ &= 2 \zeta^{\pm 1} - 2 + (-2 \zeta^{\pm 2} + 6 \zeta^{\pm 1} - 8) q + (2 \zeta^{\pm 3} - 8 \zeta^{\pm 2} + 16 \zeta^{\pm 1} - 20) q^2 + O(q^3) \end{align*} and $\phi_2 = \phi_{3A}$; $\phi_3 = \phi_{2A}$; $\phi_6 = \phi_{1A}$. 

Principal part: $$4 \mathfrak{e}_{(0,0,0)} + 2 q^{-1/4} \mathfrak{e}_{(0,1/2,0)} - 2q^{-1/12} \mathfrak{e}_{(1/6,1/2,5/6)} - 2q^{-1/12} \mathfrak{e}_{(5/6, 1/2, 1/6)}.$$

\bigskip

\noindent \textbf{Class 6B.} cycle shape $6^4$, order 6, level 36. The input Jacobi forms are \begin{align*} \phi_1 &:= \phi_{6B}(\tau, z) \\ &= 2 \zeta^{\pm 1} - 4 + (-4 \zeta^{\pm 2} + 12 \zeta^{\pm 1} - 16)q + (2 \zeta^{\pm 3} - 16 \zeta^{\pm 2} + 40 \zeta^{\pm 1} - 52) q^2 + O(q^3) \end{align*} and $\phi_2 = \phi_4 = \phi_{3B}$; $\phi_3 = \phi_9 = \phi_{2B}$; $\phi_6 = \phi_{12} = \phi_{18} = \phi_{36} = \phi_{1A}$. 

Principal part: \begin{align*} &0 \mathfrak{e}_{(0,0,0)} + 2 \sum_{a \in \mathbb{Z}/6\mathbb{Z}} q^{-1/4} \mathfrak{e}_{(a/6, 1/2, -a/6)} - 2 \sum_{a \in (\mathbb{Z}/36\mathbb{Z})^{\times}} q^{-1/9} \mathfrak{e}_{(a/36, 1/2, -5a^{-1}/36)} \\
&\quad - 2 \sum_{a \in (\mathbb{Z}/9\mathbb{Z})^{\times}} q^{-1/36} \mathfrak{e}_{(a/9, 1/2, 4a^{-1}/9)}  - 4 \sum_{a \in (\mathbb{Z}/18\mathbb{Z})^{\times}} q^{-1/36} \mathfrak{e}_{(a/18, 1/2, -a^{-1}/9)} \\ & \quad - 4 \sum_{a \in (\mathbb{Z}/18\mathbb{Z})^{\times}} q^{-1/36} \mathfrak{e}_{(-a^{-1}/9, 1/2, a/18)}. \end{align*}

\bigskip

\noindent \textbf{Classes 7A/7B.} cycle shape $1^3 7^3$, order 7, level 7. The input Jacobi forms are \begin{align*} \phi_1 &:= \phi_{7AB}(\tau, z) \\ &= 2 \zeta^{\pm 1} - 1 + (-\zeta^{\pm 2} + 5 \zeta^{\pm 1} - 8) q + (2 \zeta^{\pm 3} - 8 \zeta^{\pm 2} + 17 \zeta^{\pm 1} - 22) q^2 + O(q^3) \end{align*}  and $\phi_7 = \phi_{1A}$. 

Principal part: \begin{align*} 2 \mathfrak{e}_{(0,0,0)} + 2 q^{-1/4} \mathfrak{e}_{(0,1/2,0)} - \sum_{a \in (\mathbb{Z}/7\mathbb{Z})^{\times}} q^{-3/28} \mathfrak{e}_{(a/7, 1/2, -a^{-1}/7)}. \end{align*}

\bigskip

\noindent \textbf{Class 8A.} cycle shape $1^2 2^1 4^1 8^2$, order 8, level 8. The input Jacobi forms are \begin{align*} \phi_1 &:= \phi_{8A}(\tau, z) \\ &= 2 \zeta^{\pm 1} - 2 + (-2 \zeta^{\pm 2} + 8 \zeta^{\pm 1} - 12) q + (2 \zeta^{\pm 3} - 12 \zeta^{\pm 2} + 30 \zeta^{\pm 1} - 40) q^2 + O(q^3) \end{align*} and $\phi_2 = \phi_{4B}$; $\phi_4 = \phi_{2A}$; $\phi_8 = \phi_{1A}$. 

Principal part: $$2 \mathfrak{e}_{(0,0,0)} + 2 \mathfrak{e}_{(0,1/2,0)} - \sum_{a \in (\mathbb{Z}/8\mathbb{Z})^{\times}} q^{-1/8} \mathfrak{e}_{(a/8,1/2,-a/8)}.$$

\bigskip

\noindent \textbf{Class 10A.} cycle shape $2^2 10^2$, order 10, level 20. The input Jacobi forms are \begin{align*} \phi_1 &:= \phi_{10A}(\tau,z) \\ &= 2 \zeta^{\pm 1} - 4 + (-4 \zeta^{\pm 2} + 10 \zeta^{\pm 1} -12) q + (2 \zeta^{\pm 3} -12 \zeta^{\pm 2} + 28 \zeta^{\pm 1} - 36) q^2 + O(q^3) \end{align*} and $\phi_2 = \phi_4 = \phi_{5A}$; $\phi_5 = \phi_{2B}$; $\phi_{10} = \phi_{20} = \phi_{1A}$.  

Principal part: \begin{align*} &0 \mathfrak{e}_{(0,0,0)} + 2 \mathfrak{e}_{(0,1/2,0)} + 2 \mathfrak{e}_{(1/2,1/2,1/2)}  - 2 \sum_{a \in (\mathbb{Z}/20\mathbb{Z})^{\times}} q^{-1/10} \mathfrak{e}_{(a/20,1/2,-3a^{-1}/20) }\\ &\quad - 2 \sum_{a \in (\mathbb{Z}/10\mathbb{Z})^{\times}} q^{-1/20} \mathfrak{e}_{(a/10, 1/2, -a^{-1}/10) } - 2 \sum_{a \in (\mathbb{Z}/5\mathbb{Z})^{\times}} q^{-1/20} \mathfrak{e}_{(a/5, 1/2, a^{-1}/5) }. \end{align*}

\bigskip

\noindent \textbf{Class 11A.} cycle shape $1^2 11^2$, order 11, level 11. The input Jacobi forms are \begin{align*} \phi_1 &:= \phi_{11A}(\tau, z) \\ &= 2 \zeta^{\pm 1} - 2 + (-2\zeta^{\pm 2} + 4 \zeta^{\pm 1} - 4)q + (2 \zeta^{\pm 3} - 4 \zeta^{\pm 2} + 8 \zeta^{\pm 1} - 12) q^2 + O(q^3) \end{align*} and $\phi_{11} = \phi_{1A}$. 

Principal part: \begin{align*} &0 \mathfrak{e}_{(0,0,0)} + 2 q^{-1/4} \mathfrak{e}_{(0,1/2,0)} - 2 \sum_{a \in (\mathbb{Z}/11\mathbb{Z})^{\times}} q^{-3/44} \mathfrak{e}_{(a/11, 1/2, -2a^{-1}/11) }. \end{align*}

\bigskip

\noindent \textbf{Class 12A.} cycle shape $2^1 4^1 6^1 12^1$, order 12, level 24. The input Jacobi forms are \begin{align*} \phi_1 &:= \phi_{12A}(\tau, z) \\ &= 2 \zeta^{\pm 1} - 4 + (-4\zeta^{\pm 2} + 10 \zeta^{\pm 1} - 12) q + (2 \zeta^{\pm 3} - 12 \zeta^{\pm 2} + 32 \zeta^{\pm 1} - 44) q^2 + O(q^3) \end{align*} and $\phi_2 = \phi_{6A}$; $\phi_3 = \phi_{4A}$; $\phi_4 = \phi_8 = \phi_{3A}$; $\phi_6 = \phi_{2A}$; $\phi_{12} = \phi_{24} = \phi_{1A}$. 

Principal part: \begin{align*} &0 \mathfrak{e}_{(0,0,0)} +  2 q^{-1/4} \mathfrak{e}_{(0,1/2,0)} + 2 q^{-1/4} \mathfrak{e}_{(1/2,1/2,1/2)} \\ &\quad - \sum_{a \in (\mathbb{Z}/24\mathbb{Z})^{\times}} q^{-1/8} \mathfrak{e}_{(a/24, 1/2, -a^{-1}/8)} - \sum_{a \in (\mathbb{Z}/24\mathbb{Z})^{\times}} q^{-1/8} \mathfrak{e}_{(-a^{-1}/8, 1/2, a/24)} \\ &\quad - 2 \sum_{a \in (\mathbb{Z}/12\mathbb{Z})^{\times}} q^{-1/12} \mathfrak{e}_{(a/12, 1/2, -a^{-1}/12)} - 2 \sum_{a \in (\mathbb{Z}/24\mathbb{Z})^{\times}} q^{-1/24} \mathfrak{e}_{(a/24, 1/2, -5a^{-1}/24)}. \end{align*}

\bigskip

\noindent \textbf{Class 12B.} cycle shape $12^2$, order 12, level 144. The input Jacobi forms are \begin{align*} \phi_1 &:= \phi_{12B}(\tau, z) \\ &= 2 \zeta^{\pm 1} -4 + (-4 \zeta^{\pm 2} + 8 \zeta^{\pm 1} - 8) q + (2 \zeta^{\pm 3} - 8 \zeta^{\pm 2} + 20 \zeta^{\pm 1} - 28)q^2 + O(q^3) \end{align*} and $\phi_2 = \phi_{6B}$; $\phi_3 = \phi_9 = \phi_{4C}$;$\phi_4 = \phi_8 = \phi_{16} = \phi_{3B}$; $\phi_6 = \phi_{18} = \phi_{2B}$; $\phi_{12} = \phi_{24} = \phi_{36} = \phi_{48} = \phi_{72} = \phi_{144} = \phi_{1A}$. 

Principal part: \begin{align*} &(-2) \mathfrak{e}_{(0,0,0)} + 2q^{-1/4} \mathfrak{e}_{(0,1/2,0)} + 2q^{-1/4} \mathfrak{e}_{(1/2,1/2,1/2)} + 2 \sum_{a \in (\mathbb{Z}/3\mathbb{Z})^{\times}} q^{-1/4} \mathfrak{e}_{(a/3,1/2,-a/3)} \\ 
&\quad + 2q^{-1/4} \sum_{a \in (\mathbb{Z}/4\mathbb{Z})^{\times}} q^{-1/4} \mathfrak{e}_{(a/4, 1/2,-a/4)} + 2 \sum_{a \in (\mathbb{Z}/6\mathbb{Z})^{\times}} q^{-1/4} \mathfrak{e}_{(a/6, 1/2, -a/6) } \\
&\quad + 2 \sum_{a \in (\mathbb{Z}/12\mathbb{Z})^{\times}} q^{-1/4} \mathfrak{e}_{(a/12, 1/2, -a/12)}  - \sum_{a \in (\mathbb{Z}/72\mathbb{Z})^{\times}} q^{-1/9} \mathfrak{e}_{(a/72, 1/2, -5a^{-1}/72)} \\
&\quad - \sum_{a \in (\mathbb{Z}/72\mathbb{Z})^{\times}} q^{-1/9} \mathfrak{e}_{(a/72, 1/2, 31a^{-1}/72)}  - 2 \sum_{a \in (\mathbb{Z}/144\mathbb{Z})^{\times}} q^{-13/144} \mathfrak{e}_{(a/144, 1/2, -23a^{-1}/144)} \\ &\quad - 2 \sum_{a \in (\mathbb{Z}/48\mathbb{Z})^{\times}} q^{-1/16} \mathfrak{e}_{(a/48, 1/2, 13a^{-1}/48)}   - 2 \sum_{a \in (\mathbb{Z}/48\mathbb{Z})^{\times}} q^{-1/16} \mathfrak{e}_{(a/48, 1/2, -3a^{-1}/48) } \\ &\quad - 2 \sum_{a \in (\mathbb{Z}/9\mathbb{Z})^{\times}} q^{-1/36} \mathfrak{e}_{(a/9, 1/2, a^{-1}/9)}   - 2 \sum_{a \in (\mathbb{Z}/18\mathbb{Z})^{\times}} q^{-1/36} \mathfrak{e}_{(a/18, 1/2, -5a^{-1}/18)} \\ &\quad - 2 \sum_{\substack{a, c \in \mathbb{Z}/36\mathbb{Z} \\ \mathrm{gcd}(a, c, 36) = 1 \\ a \cdot c \in \{7, 16, 34\}}} q^{-1/36} \mathfrak{e}_{(a/36, 1/2, c/36)}  - 2 \sum_{a \in (\mathbb{Z}/144\mathbb{Z})^{\times}} q^{-1/144} \mathfrak{e}_{(a/144, 1/2, -35a^{-1}/144) }. \end{align*}

\bigskip

\noindent \textbf{Classes 14A/14B.} cycle shape $1^1 2^1 7^1 14^1$, order 14, level 14. The input Jacobi forms are \begin{align*} \phi_1 &:= \phi_{14AB}(\tau, z) \\ &= 2 \zeta^{\pm 1} - 3 + (-3\zeta^{\pm 2} + 7 \zeta^{\pm 1} - 8) q  + (2 \zeta^{\pm 3} - 8 \zeta^{\pm 2} + 19 \zeta^{\pm 1} - 26)q^2 +O(q^3) \end{align*} and $\phi_2 = \phi_{7AB}$; $\phi_7 = \phi_{2A}$; $\phi_{14} = \phi_{1A}$. 

Principal part: \begin{align*} &0 \mathfrak{e}_{(0,0,0)} + 2 q^{-1/4} \mathfrak{e}_{(0,1/2,0)}  - \sum_{a \in (\mathbb{Z}/14\mathbb{Z})^{\times}} q^{-3/28} \mathfrak{e}_{(a/14, 1/2, -a^{-1}/7) } \\
& \quad - \sum_{a \in (\mathbb{Z}/14\mathbb{Z})^{\times}} q^{-3/28} \mathfrak{e}_{(-a^{-1}/7, 1/2, a/14) }  - 2 \sum_{a \in (\mathbb{Z}/14\mathbb{Z})^{\times}} q^{-1/28} \mathfrak{e}_{(a/14, 1/2, -3a^{-1}/14)}. \end{align*}

\bigskip

\noindent \textbf{Classes 15A/15B.} cycle shape $1^1 3^1 5^1 15^1$, order 15, level 15. The input Jacobi forms are \begin{align*} \phi_1 &:= \phi_{15AB}(\tau, z) \\ &= 2\zeta^{\pm 1} - 3 + (-3\zeta^{\pm 2} + 8 \zeta^{\pm 1} - 10)q + (2 \zeta^{\pm 3} - 10 \zeta^{\pm 2} + 25 \zeta^{\pm 1} - 34)q^2 + O(q^3) \end{align*} and $\phi_3 = \phi_{5A}$; $\phi_5 = \phi_{3A}$; $\phi_{15} = \phi_{1A}$. 

Principal part: \begin{align*} &0 \mathfrak{e}_{(0,0,0)} + 2 q^{-1/4} \mathfrak{e}_{(0,1/2,0)} - \sum_{a \in (\mathbb{Z}/15 \mathbb{Z})^{\times}}  q^{-7/60} \mathfrak{e}_{(a/15, 1/2, -2a^{-1}/15)}   \\ &\quad - \sum_{a \in (\mathbb{Z}/15\mathbb{Z})^{\times}} q^{-1/20} \mathfrak{e}_{(a/15, 1/2, -a^{-1}/5)} - \sum_{a \in (\mathbb{Z}/15\mathbb{Z})^{\times}} q^{-1/20} \mathfrak{e}_{(-a^{-1}/5, 1/2, a/15)}. \end{align*}

\bigskip

\noindent \textbf{Classes 21A/21B.} cycle shape $3^1 21^1$, order 21, level 63. The input Jacobi forms are \begin{align*} \phi_1 &:= \phi_{21AB}(\tau, z) \\ &= 2 \zeta^{\pm 1} - 4 + (-4 \zeta^{\pm 2} + 11 \zeta^{\pm 1} - 14) q + (2 \zeta^{\pm 3} - 14 \zeta^{\pm 2} + 35 \zeta^{\pm 1} - 46)q^2 + O(q^3) \end{align*} and $\phi_3 = \phi_9 = \phi_{7AB}$; $\phi_7 = \phi_{3B}$; $\phi_{21} = \phi_{63} = \phi_{1A}$.

Principal part: \begin{align*} &(-2) \mathfrak{e}_{(0,0,0)} + 2 q^{-1/4} \mathfrak{e}_{(0,1/2,0)} + 2q^{-1/4} \mathfrak{e}_{(1/3,1/2,2/3)} + 2q^{-1/4} \mathfrak{e}_{(2/3,1/2,1/3)} \\ &\quad - \sum_{a \in (\mathbb{Z}/63 \mathbb{Z})^{\times}} q^{-31/252} \mathfrak{e}_{(a/63, 1/2, -8a^{-1}/63)} - \sum_{a \in (\mathbb{Z}/21\mathbb{Z})^{\times}} q^{-3/28} \mathfrak{e}_{(a/21, 1/2, -a^{-1}/21)} \\ &\quad - \sum_{a \in (\mathbb{Z}/63\mathbb{Z})^{\times}} q^{-19/252} \mathfrak{e}_{(a/63, 1/2, -11a^{-1}/63)} \\ &\quad - \sum_{a \in (\mathbb{Z}/63\mathbb{Z})^{\times}} q^{-1/36} \mathfrak{e}_{(a/63, 1/2, -7a^{-1}/9)} - \sum_{a \in (\mathbb{Z}/63\mathbb{Z})^{\times}} q^{-1/36} \mathfrak{e}_{(-7a^{-1}/9, 1/2, a/63)}. \end{align*}

\bigskip

\noindent \textbf{Classes 23A/23B.} cycle shape $1^1 23^1$, order 23, level 23. The input Jacobi forms are \begin{align*} \phi_1 &:= \phi_{23AB}(\tau, z) \\ &= 2 \zeta^{\pm 1} - 3 + (-3 \zeta^{\pm 2} + 10 \zeta^{\pm 1} - 14) q + (2 \zeta^{\pm 3} - 14 \zeta^{\pm 2} + 32 \zeta^{\pm 1} - 40) q^2 + O(q^3) \end{align*} and $\phi_{23} = \phi_{1A}$. 

Principal part: \begin{align*} &(-2) \mathfrak{e}_{(0,0,0)} + 2q^{-1/4} \mathfrak{e}_{(0,1/2,0)} \\ &\quad - \sum_{a \in (\mathbb{Z}/23\mathbb{Z})^{\times}} q^{-11/92} \mathfrak{e}_{(a/23, 1/2, -3a^{-1}/23)} - \sum_{a \in (\mathbb{Z}/23\mathbb{Z})^{\times}} q^{-7/92} \mathfrak{e}_{(a/23, 1/2, -4a^{-1}/23)}. \end{align*}

\section*{Appendix B: Data for Theorem \ref{mth2}}

In this appendix we describe the correction terms $\mathbf{B}(\hat F_g)$ appearing in Theorem \ref{mth2-equiv} by listing the family of Jacobi forms $(\phi_d)_{d | N_g} = \hat{\mathbb{J}}((\phi_d)_{d | N_g})$ and the principal parts of $\hat F_g$. These terms appear only when $g$ has cycle shape 3B or 4C. 

\bigskip

\noindent \textbf{Class 3B.} cycle shape $3^8$, order 3, level 9. The input Jacobi forms are \begin{align*} \phi_1 &:= 18 \frac{\eta(\tau)^3 \eta(9 \tau)^3}{\eta(3\tau)^2} \phi_{-2, 1}(\tau, z) \\ &= (18 \zeta^{\pm 1} - 36)q + (-36 \zeta^{\pm 2} + 90 \zeta^{\pm 1} - 108) q^2 + O(q^3) \end{align*} and $\phi_3 = 0$, $\phi_9 = 0$. 

Principal part: \begin{align*} &0 \mathfrak{e}_{(0, 0, 0)} - 2q^{-1/4} \mathfrak{e}_{(1/3, 1/2, 2/3)} - 2q^{-1/4} \mathfrak{e}_{(2/3, 1/2, 1/3)} \\ &\quad + 2q^{-1/4} \mathfrak{e}_{(1/3, 1/2, 1/3)} + 2q^{-1/4} \mathfrak{e}_{(2/3, 1/2, 2/3)} \\ &-2q^{-5/36} \sum_{a \in (\mathbb{Z}/9\mathbb{Z})^{\times}} \mathfrak{e}_{(a/9, 1/2, -a^{-1}/9)} \\ &+6q^{-1/36} \sum_{a \in (\mathbb{Z}/9\mathbb{Z})^{\times}} \mathfrak{e}_{(a/9, 1/2, -2a^{-1}/9)}. \end{align*}

\bigskip

\noindent \textbf{Class 4C.} cycle shape $4^6$, order 4, level 16. The input Jacobi forms are
\begin{align*} \phi_1 &:= 16 \frac{\eta(2\tau)^4 \eta(8\tau)^4}{\eta(4\tau)^4} \phi_{-2, 1}(\tau, z) \\ &= (16 \zeta^{\pm 1} - 32)q + (-32 \zeta^{\pm 2} + 128 \zeta^{\pm 1} - 192) q^2 + O(q^3) \end{align*} and $\phi_2 = 0$, $\phi_4 = 0$, $\phi_8 = 0$, $\phi_{16} = 0$. 

Principal part: \begin{align*} &0 \mathfrak{e}_{(0,0,0)} - 2q^{-1/4} \mathfrak{e}_{(1/4, 1/2, 3/4)} - 2q^{-1/4} \mathfrak{e}_{(3/4, 1/2, 1/4)} \\ &\quad + 2q^{-1/4} \mathfrak{e}_{(1/4, 1/2, 1/4)} + 2q^{-1/4} \mathfrak{e}_{(3/4, 1/2, 3/4)} \\ &\quad - q^{3/16} \sum_{a \in (\mathbb{Z}/16\mathbb{Z})^{\times}} \mathfrak{e}_{(a/16, 1/2, -a^{-1}/16)} \\ &\quad + 4q^{-1/16} \sum_{a \in (\mathbb{Z}/16\mathbb{Z})^{\times}} \mathfrak{e}_{(a/16, 1/2, -3a^{-1}/16)}. \end{align*}

\bibliographystyle{plainnat}
\bibliofont
\bibliography{refs}

\end{document}